\newtheorem{theorem}{Theorem}[section] 
\newtheorem{lemma}[theorem]{Lemma}     
\newtheorem{corollary}[theorem]{Corollary}
\newtheorem{proposition}[theorem]{Proposition}
\providecommand{\ch}{\mathop{\rm ch}\nolimits}
\begin{document}

\title
{A formula on Stirling numbers of
the second kind and its application to the unstable $K$-theory 
of stunted complex projective spaces}

\author
{Osamu Nishimura}

\date{\empty}

\maketitle

\begin{abstract}
A formula on Stirling numbers of the second kind $S(n, k)$ is proved.
As a corollary, 
for odd $n$ and even $k$,
it is shown that $k!S(n, k)$ is a positive multiple
of the greatest common divisor of $j!S(n, j)$ for $k+1\leq j\leq n$.
Also, as an application to algebraic topology,
some isomorphisms of unstable $K^1$-groups
of stunted complex projective spaces are deduced.
\end{abstract}

\section{Introduction} 
\label{intro}
Let $S(n, k)$ be the Stirling number of the second kind,
which is the number of partitions of a set
consisting of $n$ elements
into $k$ pairwise disjoint non-empty subsets,
and let $\tilde{S}(n, k)=k!S(n, k)$.
Let $R$ be the function defined on non-negative integers by
\[
 R(m)=\frac{2(2^{2m+2}-1)B_{2m+2}}{m+1}
\]
where $B_s$ is the $s$th Bernoulli number.
The main purpose of this paper is to show the following theorem.
\begin{theorem}
\label{main}
For any odd integer $n\geq 3$ and any positive even integer $k$
such that $k<n$,
the equation
\[
\tilde{S}(n, k)
=\frac{1}{2}\sum_{h=0}^{(n-k-1)/2}R(h)
\binom{k+2h}{k-1}\tilde{S}(n, k+2h+1)
\]
holds.
\end{theorem}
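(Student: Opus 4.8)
The plan is to pass to exponential generating functions and reduce the claim to a single functional identity for a Laplace transform of $\tanh$. First I would record the generating function of $R$: from the classical expansion $\tanh z=\sum_{m\ge 1}\frac{2^{2m}(2^{2m}-1)B_{2m}}{(2m)!}z^{2m-1}$, substituting $z=x/2$ and reindexing $m\mapsto m+1$ gives
\[
\sum_{m\ge 0}R(m)\frac{x^{2m+1}}{(2m+1)!}=\frac{2(e^{x}-1)}{e^{x}+1}=2\tanh\frac{x}{2}.
\]
Since $\sum_{n}\tilde S(n,j)\frac{x^{n}}{n!}=(e^{x}-1)^{j}$, extracting the coefficient of $x^{n}/n!$ shows that Theorem \ref{main} is equivalent to the assertion that the odd part (in $x$) of $\sum_{h\ge 0}R(h)\binom{k+2h}{k-1}(e^{x}-1)^{k+2h+1}$ equals the odd part of $2(e^{x}-1)^{k}$. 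Here $k$ even makes each exponent $k+2h+1$ odd and makes $(-1)^{k}=1$, which is exactly where the parity hypotheses will be used.

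Next I would realize the $h$-sum as an integral. Writing $\binom{k+2h}{k-1}=\frac{(k+2h)!}{(k-1)!\,(2h+1)!}$ and $(k+2h)!=\int_{0}^{\infty}t^{k+2h}e^{-t}\,dt$, and noting that for each fixed $n$ the sum is finite (since $\tilde S(n,k+2h+1)=0$ once $k+2h+1>n$), one gets
\[
\sum_{h\ge 0}R(h)\binom{k+2h}{k-1}\tilde S(n,k+2h+1)=\frac{1}{(k-1)!}\int_{0}^{\infty}e^{-t}t^{k-1}g(t)\,dt,
\]
where $g(t)=\sum_{h\ge 0}\frac{R(h)}{(2h+1)!}t^{2h+1}\tilde S(n,k+2h+1)=n![x^{n}]\bigl[(e^{x}-1)^{k}\,2\tanh\tfrac{t(e^{x}-1)}{2}\bigr]$ is a polynomial in $t$, by the generating function of $R$. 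Equivalently, with $\Omega(u)=\frac{2}{(k-1)!}\int_{0}^{\infty}e^{-t}t^{k-1}\tanh\frac{tu}{2}\,dt$ and $P(x)=(e^{x}-1)^{k}\Omega(e^{x}-1)$, a routine domination $|\partial_{x}^{n}F|\le C_{n}(1+t)^{n}$ for $F(x,t)=(e^{x}-1)^{k}2\tanh\tfrac{t(e^{x}-1)}{2}$ lets me differentiate under the integral sign and identify $P^{(n)}(0)$ with the sum above.

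The crux is a functional identity for $\Omega$. Substituting $\tau=tu$ gives $u^{k}\Omega(u)=\frac{2}{(k-1)!}\int_{0}^{\infty}e^{-\tau/u}\tau^{k-1}\tanh\frac{\tau}{2}\,d\tau$, and likewise for $b=u/(1+u)$, where $1/b=1+1/u$, so adding the two integrals produces the weight $\tanh\frac{\tau}{2}(1+e^{-\tau})=1-e^{-\tau}$; the two resulting Gamma integrals collapse to
\[
u^{k}\Omega(u)+b^{k}\Omega(b)=2\bigl(u^{k}-b^{k}\bigr),\qquad b=\frac{u}{1+u}.
\]
Taking $u=e^{x}-1$, so that $b=1-e^{-x}$ and, by the oddness of $\Omega$, $b^{k}\Omega(b)=-P(-x)$, this reads $P(x)-P(-x)=2\bigl[(e^{x}-1)^{k}-(e^{-x}-1)^{k}\bigr]$. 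Hence the odd part of $P$ is twice the odd part of $(e^{x}-1)^{k}$, and comparing coefficients of $x^{n}$ for odd $n$ gives $P^{(n)}(0)=2\tilde S(n,k)$. Combined with the previous step this yields $\sum_{h}R(h)\binom{k+2h}{k-1}\tilde S(n,k+2h+1)=2\tilde S(n,k)$, which is Theorem \ref{main}.

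The genuine content is the cancellation $\tanh\frac{\tau}{2}(1+e^{-\tau})=1-e^{-\tau}$, which is just the defining relation $2\tanh\frac{\tau}{2}(e^{\tau}+1)=2(e^{\tau}-1)$ in disguise; finding the integral representation that exposes it is the main creative step. The chief technical obstacle is the interchange of the termwise divergent ordinary generating function $\sum_{h}R(h)\binom{k+2h}{k-1}u^{2h+1}$ with the Laplace integral, i.e. the claim that the derivatives of the Borel-summed $\Omega$ really reproduce the finite combinatorial sum. I expect to settle this exactly through the finiteness of the sum for each fixed $n$, which makes $g$ a genuine polynomial and every integral convergent, together with the domination bound above; alternatively, Watson's lemma identifies the asymptotic expansion of $\Omega(u)$ at $u=0$ with $\sum_{h}R(h)\binom{k+2h}{k-1}u^{2h+1}$ and gives the same conclusion.
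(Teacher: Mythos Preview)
Your argument is correct and takes a genuinely different route from the paper's. The paper starts from the classical identity $\tilde S(n,k)=\sum_{j=k}^{n}(-1)^{n-j}\binom{j-1}{k-1}\tilde S(n,j)$, which for odd $n$ and even $k$ becomes \eqref{seed}, and then proceeds by induction on $m=(n-k-1)/2$: at the inductive step the even-index terms of \eqref{seed} are replaced using the already-proven instances, and the resulting double sum collapses via the recurrence $R(h)=1-\tfrac12\sum_{\ell=0}^{h-1}R(\ell)\binom{2h+1}{2\ell+1}$ of Proposition~\ref{Rec_of_RO}. Your proof instead packages the entire $h$-sum as $P^{(n)}(0)$ for the smooth function $P(x)=(e^{x}-1)^{k}\Omega(e^{x}-1)$ and settles the theorem in one stroke from the functional equation $u^{k}\Omega(u)+b^{k}\Omega(b)=2(u^{k}-b^{k})$ with $b=u/(1+u)$, which is nothing but the Laplace-transform shadow of $\tanh(\tau/2)(1+e^{-\tau})=1-e^{-\tau}$. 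The paper's approach is entirely elementary (no integrals, no dominated convergence) and isolates the recurrence for $R$ that is reused elsewhere; your approach is non-inductive, makes the identity transparent as a single algebraic relation, and would generalize more readily to variants where the weight replacing $R$ has a known Laplace-type generating function. One small point you should make explicit in a polished write-up: the substitution $\tau=tu$ yields the functional equation only for $u>0$, so $P(x)-P(-x)=2[(e^{x}-1)^{k}-(e^{-x}-1)^{k}]$ is first established for $x>0$; since both sides are smooth and odd in $x$, the equality of all derivatives at $x=0$ then follows by continuity, which is what ``comparing coefficients of $x^{n}$'' needs here (note that $\Omega$ is $C^{\infty}$ but not analytic at $0$, so the Taylor series of $P$ cannot be invoked as a convergent object).
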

For example, we have
\begin{align*}
\tilde{S}(7, 4)&=\frac{1}{2}\left(R(0)\binom{4}{3}\tilde{S}(7, 5)
+R(1)\binom{6}{3}\tilde{S}(7, 7)\right) \\
&=\frac{1}{2}\left(1\times 4\times 16800
-\frac{1}{2}\times 20\times 5040\right)=8400.
\end{align*}

For this purpose, 
first we investigate properties 
of the function $R$,
which we call 
{\em the recurrence weight (of Stirling numbers of the second kind)\/}
in this paper. 
It has representations
\begin{align*}
R(m)
&=\sum_{j=1}^m \left(-\frac{1}{2}\right)^j
\sum_{(I_1, \cdots, I_j)\in \tilde{E}_{m, j}}
\binom{2I_2}{2I_1}\binom{2I_3}{2I_2}\cdots\binom{2I_j}{2I_{j-1}} \\
&=\sum_{j=0}^m \left(-\frac{1}{2}\right)^j
\sum_{(I_0, I_1, \cdots, I_j)\in \tilde{O}_{m, j}}
\binom{2I_1-1}{2I_0-1}\binom{2I_2-1}{2I_1-1}\cdots\binom{2I_j-1}{2I_{j-1}-1}
\end{align*}
for any positive integer $m$ where
\begin{align*}
\tilde{E}_{m, j}&=\{(I_1, \cdots, I_j)\in \mathbb{Z}^j\ 
\vert\ 0<I_1<I_2<\cdots <I_j=m\}, \\
\tilde{O}_{m, j}&=\{(I_0, I_1, \cdots, I_j)\in \mathbb{Z}^{j+1}\ 
\vert\ 0<I_0<I_1<\cdots <I_j=m+1\}, 
\end{align*}
and
\[
\binom{2I_2}{2I_1}\binom{2I_3}{2I_2}\cdots\binom{2I_j}{2I_{j-1}},
\quad 
\binom{2I_1-1}{2I_0-1}\binom{2I_2-1}{2I_1-1}\cdots\binom{2I_j-1}{2I_{j-1}-1}
\]
are to be interpreted as $1$ for $j=1$ and for $j=0$ respectively.
Thus,
the recurrence weight $R$ also has combinatorial nature,
from which some simple recurrence formulae are derived.
We also describe recurrence formulae for 
Bernoulli numbers $B_{2m+2}$
which are derived from those for the recurrence weight $R$,
because they may be of independent interest.

Next, 
we consider the identity 
\[
 \tilde{S}(n, k)
=\sum_{j=k}^{n}(-1)^{n-j}\binom{j-1}{k-1}
\tilde{S}(n, j).
\]
(See Quaintance and Gould~\cite[(9.18)]{QG_2016}.)
In particular, 
if $n$ and $k$ are as in Theorem~{\rm\ref{main}},
we have
\begin{equation}
\label{seed}
\tilde{S}(n, k)
=\frac{1}{2}\sum_{j=k+1}^{n}(-1)^{n-j}\binom{j-1}{k-1}
\tilde{S}(n, j).
\end{equation}
Then,
we can show Theorem~{\rm\ref{main}}
by induction and by properties of the recurrence weight $R$.

Let $\Delta_{n, k}$ be the greatest common divisor of $\tilde{S}(n, j)$
for $j=k, k+1, \cdots, n$.
(See Lundell~\cite{Lundell_1978} for $\Delta_{n, k}$ where $k\leq 12$.)
Then, 
we have the following corollary of Theorem~{\rm\ref{main}}.
\begin{corollary}
\label{gcd}
For any odd integer $n\geq 3$ and any positive even integer $k$
such that $k<n$, 
the equation 
\[
 \Delta_{n, k}=\Delta_{n, k+1}
\]
holds.
In other words, $\tilde{S}(n, k)$ 
is a positive multiple of $\Delta_{n, k+1}$.
\end{corollary}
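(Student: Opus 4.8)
The plan is to reduce the corollary to a single divisibility statement and then to verify it one prime at a time. Since $\Delta_{n,k}$ is the greatest common divisor of the $\tilde{S}(n,j)$ with $j\ge k$, while $\Delta_{n,k+1}$ is the greatest common divisor of the same quantities with $j\ge k+1$, we always have $\Delta_{n,k}=\gcd\bigl(\tilde{S}(n,k),\Delta_{n,k+1}\bigr)$, and in particular $\Delta_{n,k}\mid\Delta_{n,k+1}$. Hence the equality $\Delta_{n,k}=\Delta_{n,k+1}$ is equivalent to $\Delta_{n,k+1}\mid\tilde{S}(n,k)$, and since $\tilde{S}(n,k)=k!\,S(n,k)>0$ this same divisibility yields the ``positive multiple'' reformulation. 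So it suffices to prove $\Delta_{n,k+1}\mid\tilde{S}(n,k)$, which I establish by showing $v_p\bigl(\tilde{S}(n,k)\bigr)\ge v_p\bigl(\Delta_{n,k+1}\bigr)=\min_{j\ge k+1}v_p\bigl(\tilde{S}(n,j)\bigr)$ for every prime $p$, where $v_p$ denotes the $p$-adic valuation.

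For odd $p$ the identity (\ref{seed}) already settles the matter, and Theorem~\ref{main} is not needed. Clearing the factor $\tfrac12$ in (\ref{seed}) gives $2\,\tilde{S}(n,k)=\sum_{j=k+1}^{n}(-1)^{n-j}\binom{j-1}{k-1}\tilde{S}(n,j)$, an integer linear combination of the $\tilde{S}(n,j)$ with $j\ge k+1$; thus $\Delta_{n,k+1}\mid 2\,\tilde{S}(n,k)$. For odd $p$ the factor $2$ is a unit, so $v_p(\tilde{S}(n,k))=v_p(2\,\tilde{S}(n,k))\ge v_p(\Delta_{n,k+1})$. The whole difficulty is therefore concentrated at $p=2$, precisely the one factor of $2$ that this elementary argument loses.

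For $p=2$ I invoke Theorem~\ref{main}, which writes $\tilde{S}(n,k)=\sum_{h=0}^{(n-k-1)/2}c_h\,\tilde{S}(n,k+2h+1)$ with $c_h=\tfrac12 R(h)\binom{k+2h}{k-1}$. Every index $k+2h+1$ lies in $[k+1,n]$, so each $\tilde{S}(n,k+2h+1)$ is divisible by $\Delta_{n,k+1}$; consequently it suffices to prove $v_2(c_h)\ge 0$ for all $h$, for then each summand $c_h\,\tilde{S}(n,k+2h+1)$ has $2$-adic valuation at least $v_2(\Delta_{n,k+1})$, and hence so does their sum $\tilde{S}(n,k)$. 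I would split $v_2(c_h)=-1+v_2(R(h))+v_2\binom{k+2h}{2h+1}$ and bound the last two terms separately. For the binomial I use that $k$ is even, so $k-1$ is odd, while $2h+1$ has exactly $1+v_2(h+1)$ trailing $1$'s in base $2$ (since $2h+2=2(h+1)$); Kummer's theorem on carries then gives $v_2\binom{k+2h}{2h+1}\ge 1+v_2(h+1)$. For the weight I need the sharp bound $v_2(R(h))\ge -v_2(h+1)$. Granting it, $v_2(c_h)\ge -1-v_2(h+1)+1+v_2(h+1)=0$, which finishes the prime-$2$ case and hence the corollary. Incidentally, together with the relation $2^{h}R(h)\in\mathbb{Z}$ read off from the first combinatorial representation of $R$, so that $R$ has no odd prime in its denominator, these estimates show that every $c_h$ is in fact an integer, matching the worked example $\tilde{S}(7,4)=2\,\tilde{S}(7,5)-5\,\tilde{S}(7,7)$.

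The main obstacle is the sharp two-adic bound $v_2(R(h))\ge -v_2(h+1)$: the crude estimate $v_2(R(h))\ge -h$ that the combinatorial representation supplies is far too weak for the tight inequality above. I expect to obtain the sharp bound from the simple recurrence formulae for $R$ foreshadowed in the introduction, running an induction on $h$ that tracks the exact power of $2$ in the denominator of $R(h)$; equivalently, one may identify $R(h)$ up to sign with the rescaled tangent number and use the classical fact that the associated Genocchi numbers are odd, which gives the exact value $v_2(R(h))=-v_2(h+1)$. Everything else, namely the reduction to a divisibility, the odd-prime case via (\ref{seed}), and the Kummer carry count, is routine.
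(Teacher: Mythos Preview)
Your overall strategy matches the paper's exactly: reduce to $\Delta_{n,k+1}\mid\tilde{S}(n,k)$, dispose of odd primes via \eqref{seed}, and for $p=2$ use Theorem~\ref{main} and show each coefficient $c_h=\tfrac12 R(h)\binom{k+2h}{k-1}$ has nonnegative $2$-adic valuation. The only differences are in how the $p=2$ estimate is executed.

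For what you call the ``main obstacle'' $v_2(R(h))\ge -v_2(h+1)$, the paper's route is shorter than either of your suggestions: since $R$ is \emph{defined} in the introduction by $R(m)=\dfrac{2(2^{2m+2}-1)B_{2m+2}}{m+1}$, the von Staudt--Clausen theorem (which gives $v_2(B_{2h+2})=-1$) immediately yields $v_2(R(h))=1+0-1-v_2(h+1)=-v_2(h+1)$. No recurrence induction is needed, and your Genocchi remark is really the same observation in disguise, since the oddness of the Genocchi numbers is precisely this special case of von Staudt--Clausen.

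The paper also sidesteps your Kummer carry-count: once $v_2(R(h))=-v_2(h+1)$ is known, one rewrites
\[
\frac{1}{2}\cdot\frac{1}{h+1}\cdot\frac{(k+2h)!}{(k-1)!(2h+1)!}
=\frac{(k+2h+1)!}{(k-1)!(2h+2)!}\cdot\frac{1}{k+2h+1}
=\binom{k+2h+1}{k-1}\cdot\frac{1}{k+2h+1},
\]
and since $k$ is even, $k+2h+1$ is odd, so $v_2(c_h)=v_2\binom{k+2h+1}{k-1}\ge 0$ is immediate. Your Kummer argument is correct and reaches the same conclusion; the paper's manipulation is just a touch more direct.
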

To prove Corollary~{\rm\ref{gcd}},
it suffices to show that
\[
 \nu_p(\tilde{S}(n, k))\geq \nu_p(\Delta_{n, k+1})
=\min\{\nu_p(\tilde{S}(n, j))\ \vert\ j=k+1, k+2, \cdots, n\}
\]
for any prime $p$, where $\nu_p(i)$ denotes the power of $p$
in the factorization of a non-zero integer $i$ into prime powers.
By \eqref{seed}, 
for any odd prime $p$, 
it is immediate that 
$\nu_p(\tilde{S}(n, k))\geq \nu_p(\Delta_{n, k+1})$.
For $p=2$, 
we invoke the theorem of Staudt~\cite{Staudt_1840} 
and Clausen~\cite{Clausen_1840}
which states that
\[
 B'_{2m+2}=\prod_{p:\text{prime}, \frac{2m+2}{p-1}\in\mathbb{Z}}p
\]
where $B'_s$ is the denominator of $B_s$
when the fraction is expressed in its lowest terms.
Then, 
by Theorem~{\rm\ref{main}},
we can estimate $\nu_2(\tilde{S}(n, k))$ so that 
$\nu_2(\tilde{S}(n, k))\geq \nu_2(\Delta_{n, k+1})$.
(See section~{\rm\ref{pf_of_gcd}} for details.)

It is known that the numbers $\Delta_{n, k}$ 
are important in algebraic topology.
(See for example \cite{BD_1991, CK_1988, Davis_1991, Davis_2008, 
DP_2007, Lundell_1974, Lundell_1978}.)
We have an application of Corollary~{\rm\ref{gcd}}
which concerns the unstable $K$-theory
of stunted complex projective spaces
and which is the motivation of this paper.
Let $\mathbb{C}P^n$ be the
complex projective space of complex lines in $\mathbb{C}^{n+1}$.\ 
We abbreviate the stunted complex projective space
$\mathbb{C}P^n/\mathbb{C}P^{k-1}$
by $\mathbb{C}P^n_k$.\ 
The set $U_n(X)=[X, U(n)]_*$ of pointed homotopy classes
of pointed maps from a topological space $X$ to 
the unitary group $U(n)$ of rank $n$ has
a group structure induced by the canonical multiplication of $U(n)$.\ 
(See Hamanaka and Kono~\cite{HK_2003}.)
Following \cite{HK_2003}, 
we call $U_n(X)$ the unstable $K^1$-group of $X$.
It is known that $U_n(X)$ is useful for studying
homotopy types of certain gauge groups. 
(See for example Hamanaka and Kono~\cite{HK_2006, HK_2007}.)
Using the result of \cite{HK_2003}, 
we can see that
$U_n(\mathbb{C}P^{n}_{k})$ is a cyclic group 
of order $\Delta_{n, k}$
and that we have a sequence of epimorphisms of groups
\begin{equation}
 \label{Useq}
 U_n(\mathbb{C}P^{n}_{n})
 \xrightarrow{q^*} U_n(\mathbb{C}P^{n}_{n-1})
\xrightarrow{q^*} \cdots
\xrightarrow{q^*} U_n(\mathbb{C}P^{n}_{3})
\xrightarrow{q^*} U_n(\mathbb{C}P^{n}_{2})
\xrightarrow{q^*} U_n(\mathbb{C}P^{n}_{1})=0
\end{equation}
where 
$q=q_{j, j+1}\colon \mathbb{C}P^{n}_{j}\to \mathbb{C}P^{n}_{j+1}$ 
is the natural projection
for $j=1, 2, \cdots, n-1$
and $q^*$ is the homomorphism induced by $q$.
(See Proposition~{\rm\ref{gcd_is_order}}.)
Then,
by Corollary~{\rm\ref{gcd}}, 
we have the following theorem.
\begin{theorem}
\label{unstable}
For any odd integer $n\geq 3$ and any positive even integer $k$
such that $k<n$, 
\[
q^*\colon U_n(\mathbb{C}P^{n}_{k+1})\to
U_n(\mathbb{C}P^{n}_{k})
\]
is an isomorphism of groups.
\end{theorem}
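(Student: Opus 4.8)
The plan is to combine the purely arithmetic conclusion of Corollary~\ref{gcd} with the algebraic structure of the groups $U_n(\mathbb{C}P^n_j)$ supplied by Proposition~\ref{gcd_is_order}. First I would record the two facts furnished by that proposition: that $U_n(\mathbb{C}P^n_j)$ is a cyclic group of order $\Delta_{n,j}$ for each $j$, and that the map $q^*\colon U_n(\mathbb{C}P^n_{k+1})\to U_n(\mathbb{C}P^n_k)$ in the theorem is precisely the arrow of the sequence~\eqref{Useq} induced by $q=q_{k,k+1}\colon\mathbb{C}P^n_k\to\mathbb{C}P^n_{k+1}$, so that it is an epimorphism. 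For this identification to be legitimate one checks only that $j=k$ lies in the admissible range $1\le j\le n-1$, which holds since $k$ is a positive even integer with $k<n$, whence $2\le k\le n-1$.

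Next I would observe that each $\Delta_{n,j}$, being the greatest common divisor of the finitely many positive integers $\tilde S(n,i)$ with $j\le i\le n$ (and with $\tilde S(n,n)=n!>0$ guaranteeing nonvanishing), is a positive integer. Hence both $U_n(\mathbb{C}P^n_{k+1})$ and $U_n(\mathbb{C}P^n_k)$ are \emph{finite} cyclic groups, of orders $\Delta_{n,k+1}$ and $\Delta_{n,k}$ respectively. By Corollary~\ref{gcd}, under the hypotheses on $n$ and $k$ one has $\Delta_{n,k}=\Delta_{n,k+1}$, so the source and target of $q^*$ have equal finite cardinality.

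Finally I would invoke the elementary fact that a surjection between finite sets of the same cardinality is a bijection. Since $q^*$ is a surjective group homomorphism between finite groups of equal order, it is bijective, hence an isomorphism of groups, and the proof is complete.

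As for difficulty: essentially all the substantive work has already been carried out in the earlier results, so there is no genuine obstacle here; the theorem is a direct corollary of Corollary~\ref{gcd} together with Proposition~\ref{gcd_is_order}. The only points demanding any care are bookkeeping: verifying that the relevant arrow of \eqref{Useq} exists for the given range of $k$, and confirming finiteness of the groups so that surjectivity combined with equality of orders forces injectivity.
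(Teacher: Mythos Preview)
Your proposal is correct and follows essentially the same approach as the paper: the paper's proof of Theorem~\ref{unstable} is the single sentence ``Hence, Theorem~\ref{unstable} follows from this proposition and Corollary~\ref{gcd},'' where ``this proposition'' is Proposition~\ref{gcd_is_order}. Your write-up simply makes explicit the finiteness check and the surjection-of-equal-cardinality argument that the paper leaves implicit.
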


This paper is organized as follows.
In section~{\rm\ref{Def_of_R}}, 
we prepare for the recurrence weight $R$
stated above.
In section~{\rm\ref{pf_of_main}}, we prove Theorem~{\rm\ref{main}}.
In section~{\rm\ref{pf_of_gcd}}, we prove Corollary~{\rm\ref{gcd}}.
In section~{\rm\ref{pf_of_unstable}}, we prove Theorem~{\rm\ref{unstable}}
and give some corollaries and examples. 
Moreover,
in terms of the sequence \eqref{Useq},
we give a characterization of
the complex James number $b_k$
of Stiefel manifolds  
(see James~\cite{James_1958} and Atiyah~\cite{Atiyah_1961})
which is the same as the Atiyah-Todd number $M_k$ 
(see Atiyah and Todd~\cite{AT_1960})
by the result of Adams and Walker~\cite{AW_1965}.
(See Proposition~{\rm\ref{James}}.)

\section{Definition and some properties of the recurrence weight $R$}
\label{Def_of_R}
In this section, 
we define a function $R$,
which we call the recurrence weight,
and investigate its some properties.

For any positive integers $m$ and $j$,
let $M(m, j)$ be the central factorial number of the second kind
which is defined by
\[
 (e^z+e^{-z}-2)^j=\sum_{m=1}^{\infty}\frac{(2j)!}{(2m)!}M(m,j)z^{2m}
\]
for $z\in\mathbb{C}$.
Since 
\begin{equation}
\label{expansion}
e^z+e^{-z}-2
=2\sum_{m=1}^{\infty}\frac{z^{2m}}{(2m)!}
=2\left(\frac{z^2}{2!}+\frac{z^4}{4!}+\cdots\right),
\end{equation}
$M(m, j)$ is explicitly given by
\begin{align*}
 M(m, j)&=\frac{2^j(2m)!}{(2j)!}\sum_{(i_1, \cdots, i_j)\in E_{m, j}}
 \frac{1}{(2i_1)!(2i_2)!\cdots(2i_j)!} \\
 &=\frac{2^j}{(2j)!}\sum_{(I_1, \cdots, I_j)\in \tilde{E}_{m, j}}
 \binom{2I_2}{2I_1}\binom{2I_3}{2I_2}\cdots\binom{2I_j}{2I_{j-1}}
\end{align*}
where
\begin{align*}
 E_{m, j}&=\{(i_1, \cdots, i_j)\in \mathbb{Z}^j\ 
 \vert\ i_1>0, \cdots, i_j>0,\ i_1+\cdots +i_j=m\}, \\
 \tilde{E}_{m, j}&=\{(I_1, \cdots, I_j)\in \mathbb{Z}^j\ 
 \vert\ 0<I_1<I_2<\cdots <I_j=m\},
\end{align*}
and
\[
 \binom{2I_2}{2I_1}\binom{2I_3}{2I_2}\cdots\binom{2I_j}{2I_{j-1}}
\]
is to be interpreted as $1$ for $j=1$.

Let $R_E$ be the function defined on non-negative integers by
\begin{align*}
 R_E(m)&=\sum_{j=1}^m(-1)^j\frac{(2j)!}{2^{2j}}M(m, j) \\
       &=(2m)!\sum_{j=1}^m\left(-\frac{1}{2}\right)^j
\sum_{(i_1, \cdots, i_j)\in E_{m, j}}
 \frac{1}{(2i_1)!(2i_2)!\cdots(2i_j)!}  \\
       &=\sum_{j=1}^m \left(-\frac{1}{2}\right)^j
 \sum_{(I_1, \cdots, I_j)\in \tilde{E}_{m, j}}
 \binom{2I_2}{2I_1}\binom{2I_3}{2I_2}\cdots\binom{2I_j}{2I_{j-1}}
\in \mathbb{Q}  
\end{align*}
for any positive integer $m$
and by $R_E(0)=1$.
\begin{lemma}
\label{Lem_of_RE}
We have
 \[
 R_E(m)=\frac{2(2^{2m+2}-1)B_{2m+2}}{m+1}.
 \]
\end{lemma}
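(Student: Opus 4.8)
The plan is to pass to generating functions. The defining relation of the central factorial numbers gives $(2j)!\,M(m,j)=(2m)!\,[z^{2m}](e^z+e^{-z}-2)^j$, where $[z^{2m}]$ denotes the coefficient of $z^{2m}$. Substituting this into the definition of $R_E$, I would first write
\[
R_E(m)=(2m)!\,[z^{2m}]\sum_{j=1}^{m}\left(-\frac14\right)^{j}(e^z+e^{-z}-2)^{j}.
\]
Since the lowest-order term of $(e^z+e^{-z}-2)^j$ is a multiple of $z^{2j}$ by \eqref{expansion}, the terms with $j>m$ do not affect the coefficient of $z^{2m}$, so the summation may be extended to $j=\infty$ without changing the value.

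The second step is to evaluate the resulting series in closed form. Writing $e^z+e^{-z}-2=4\sinh^2(z/2)$, the factor $(-\tfrac14)^j(e^z+e^{-z}-2)^j$ becomes $(-\sinh^2(z/2))^j$, so the sum is a geometric series:
\[
\sum_{j=1}^{\infty}\left(-\sinh^2\!\Big(\frac z2\Big)\right)^{j}
=\frac{-\sinh^2(z/2)}{1+\sinh^2(z/2)}
=-\tanh^2\!\Big(\frac z2\Big),
\]
using $1+\sinh^2=\cosh^2$. Hence $R_E(m)=(2m)!\,[z^{2m}]\bigl(-\tanh^2(z/2)\bigr)$, reducing the problem to a single coefficient extraction.

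The third step is that extraction. I would use $\tfrac{d}{dz}\tanh(z/2)=\tfrac12\bigl(1-\tanh^2(z/2)\bigr)$ to rewrite $-\tanh^2(z/2)=-1+2\,\tfrac{d}{dz}\tanh(z/2)$, so that for $m\geq1$ the constant drops out and $[z^{2m}]\bigl(-\tanh^2(z/2)\bigr)=2\,[z^{2m}]\tfrac{d}{dz}\tanh(z/2)$. Substituting the classical expansion $\tanh(z/2)=\sum_{n\geq1}\frac{2(2^{2n}-1)B_{2n}}{(2n)!}z^{2n-1}$, differentiating term by term, and reading off the $n=m+1$ term yields the coefficient; multiplying by $(2m)!$ and simplifying via $(2m+2)!=(2m+2)(2m+1)(2m)!$ collapses everything to $\frac{2(2^{2m+2}-1)B_{2m+2}}{m+1}$. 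The boundary case $m=0$, where $R_E(0)=1$ by definition, is verified directly against the formula using $B_2=1/6$.

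The only genuinely substantive point is the closed-form summation of the geometric series and the identity $-\tanh^2(z/2)=-1+2\bigl(\tanh(z/2)\bigr)'$; everything afterward is bookkeeping with factorials and the standard Bernoulli expansion of $\tanh$. The main obstacle I anticipate is keeping the half-argument powers of $2$ straight when normalizing the Taylor series of $\tanh(z/2)$, since a slip there propagates directly into the factor $2^{2m+2}-1$ in the final answer.
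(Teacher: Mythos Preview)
Your proposal is correct and follows essentially the same route as the paper: both arguments sum the geometric series $\sum_{j\ge1}(-\tfrac14)^j(e^z+e^{-z}-2)^j$ to $-\tanh^2(z/2)$, rewrite this via $-\tanh^2(z/2)=-1+2\,\tfrac{d}{dz}\tanh(z/2)$, and then read off the coefficient from the Bernoulli expansion of $\tanh(z/2)$. The only cosmetic differences are that the paper frames this as equating two power-series expressions rather than using coefficient-extraction notation, and it does not separate out the $m=0$ case (the defining formula and the generating-function identity already match there).
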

\begin{proof}
By \eqref{expansion},
we have
\[
\sum_{j=1}^{\infty}
\left(-\frac{1}{4}\right)^j(e^z+e^{-z}-2)^j
=\sum_{j=1}^{\infty}
\left(-\frac{1}{2}\right)^j
\left(\frac{z^2}{2!}+\frac{z^4}{4!}+\cdots\right)^j
\]
which is analytic on
\[
 D=\{z\in \mathbb{C}\ \vert\ 
\left\vert\frac{1}{4}(e^z+e^{-z}-2)\right\vert
=\left\vert\sinh^2 \frac{z}{2}\right\vert
<1\},
\] 
a neighborhood of $0$.
Here recall that 
\[
 \frac{d}{dz}(\tanh z)=1-\tanh^2z
\]
and hence,
\[
 \frac{d}{dz}\left(\tanh \frac{z}{2}\right)
=\frac{1}{2}\left(1-\tanh^2\frac{z}{2}\right).
\]
Also recall that
the Maclaurin series of $\tanh z$
is given by
\[
 \tanh z=\sum_{m=1}^{\infty}
\frac{2^{2m}(2^{2m}-1)B_{2m}}{(2m)!}z^{2m-1}
\]
and hence,
\begin{equation}
\label{Mac_of_tanh}
 \tanh \frac{z}{2}=\sum_{m=1}^{\infty}
\frac{2(2^{2m}-1)B_{2m}}{(2m)!}z^{2m-1}.
\end{equation}
Then, on one hand, we have
\begin{align*}
\sum_{j=1}^{\infty}
\left(-\frac{1}{4}\right)^j(e^z+e^{-z}-2)^j 
&=\frac{-\frac{1}{4}(e^z+e^{-z}-2)}{1+\frac{1}{4}(e^z+e^{-z}-2)} \\
&=-\frac{\sinh^2 \frac{z}{2}}{\cosh^2 \frac{z}{2}} \\
&=-\tanh^2 \frac{z}{2} \\
&=2\frac{d}{dz}\left(\tanh \frac{z}{2}\right)-1 \\
&=2\frac{d}{dz}\left(
\sum_{m=1}^{\infty}\frac{2(2^{2m}-1)B_{2m}}{(2m)!}z^{2m-1}
\right)-1 \\
&=\left(\sum_{m=1}^{\infty}
\frac{2^2(2^{2m}-1)B_{2m}(2m-1)}{(2m)!}z^{2m-2}\right)-1 \\
&=\sum_{m=1}^{\infty}
\frac{2^2(2^{2m+2}-1)B_{2m+2}(2m+1)}{(2m+2)!}z^{2m} \\
&=\sum_{m=1}^{\infty}
\frac{2(2^{2m+2}-1)B_{2m+2}}{(2m)!(m+1)}z^{2m}.
\end{align*}
On the other hand, we have
\begin{align*}
&\phantom{=}\sum_{j=1}^{\infty}
\left(-\frac{1}{2}\right)^j
\left(\frac{z^2}{2!}+\frac{z^4}{4!}+\cdots\right)^j \\
&=\sum_{m=1}^{\infty}
\left(\sum_{j=1}^m\left(-\frac{1}{2}\right)^j
\sum_{(i_1, \cdots, i_j)\in E_{m, j}}
 \frac{1}{(2i_1)!(2i_2)!\cdots(2i_j)!}\right)z^{2m} \\
&=\sum_{m=1}^{\infty}\frac{R_E(m)}{(2m)!}z^{2m}.
\end{align*}
Hence, we have
 \[
 R_E(m)=\frac{2(2^{2m+2}-1)B_{2m+2}}{m+1}
 \]
as desired.
\end{proof}

Similarly, let $R_O$ be the function defined on non-negative integers by
\begin{align*}
R_O(m)
&=(2m+1)!\sum_{j=0}^{m}\left(-\frac{1}{2}\right)^{j}
\!\!\!
\sum_{(i_0, i_1, \cdots, i_j)\in O_{m, j}}
 \frac{1}{(2i_0-1)!(2i_1)!\cdots(2i_j)!}  \\
&=\sum_{j=0}^m \left(-\frac{1}{2}\right)^j
\!\!\!
\sum_{(I_0, I_1, \cdots, I_j)\in \tilde{O}_{m, j}}
\!\!
\binom{2I_1-1}{2I_0-1}\binom{2I_2-1}{2I_1-1}\cdots\binom{2I_j-1}{2I_{j-1}-1}
\in \mathbb{Q}  
\end{align*}
for any non-negative integer $m$ where
\begin{align*}
 O_{m, j}&=\{(i_0, i_1, \cdots, i_j)\in \mathbb{Z}^{j+1}\ 
 \vert\ i_0>0, i_1>0, \cdots, i_j>0,\  \\
& \qquad \qquad \qquad \qquad \qquad \qquad \qquad \qquad 
i_0+i_1+\cdots +i_j=m+1\}, \\
 \tilde{O}_{m, j}&=\{(I_0, I_1, \cdots, I_j)\in \mathbb{Z}^{j+1}\ 
 \vert\ 0<I_0<I_1<\cdots <I_j=m+1\},
\end{align*}
and
\[
 \binom{2I_1-1}{2I_0-1}\binom{2I_2-1}{2I_1-1}\cdots\binom{2I_j-1}{2I_{j-1}-1}
\]
is to be interpreted as $1$ for $j=0$.
\begin{lemma}
\label{Lem_of_RO}
We have
 \[
 R_O(m)=\frac{2(2^{2m+2}-1)B_{2m+2}}{m+1}.
 \]
\end{lemma}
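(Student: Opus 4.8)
The plan is to repeat the generating-function argument of Lemma~\ref{Lem_of_RE}, but now working with an exponential generating function supported on the \emph{odd} powers of $z$, namely $\sum_{m=0}^{\infty}R_O(m)\frac{z^{2m+1}}{(2m+1)!}$. The two building blocks are the series $\sinh z=\sum_{i\geq 1}\frac{z^{2i-1}}{(2i-1)!}$, which accounts for the distinguished index $i_0$ (note that the total $z$-degree contributed by $(i_0,\dots,i_j)\in O_{m,j}$ is $(2i_0-1)+2i_1+\dots+2i_j=2(m+1)-1=2m+1$), and the series $\cosh z-1=\frac12(e^z+e^{-z}-2)=\sum_{i\geq 1}\frac{z^{2i}}{(2i)!}$ from \eqref{expansion}, which accounts for each of the remaining indices $i_1,\dots,i_j$.

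First I would form the series
\[
\sum_{j=0}^{\infty}\left(-\frac12\right)^{j}\sinh z\,(\cosh z-1)^{j},
\]
which, exactly as in Lemma~\ref{Lem_of_RE}, is analytic on the neighborhood $D=\{z\in\mathbb{C}\mid |\sinh^2(z/2)|<1\}$ of $0$. Expanding each term by the Cauchy product, the coefficient of $z^{2m+1}$ in the $j$-th summand is $(-\frac12)^j$ times the sum over $(i_0,\dots,i_j)\in O_{m,j}$ of $\frac{1}{(2i_0-1)!(2i_1)!\cdots(2i_j)!}$, since the degree constraint $(2i_0-1)+2i_1+\dots+2i_j=2m+1$ is precisely $i_0+\dots+i_j=m+1$ with all $i_l\geq 1$. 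Multiplying by $(2m+1)!$ and summing over $j$ (which ranges over $0\leq j\leq m$, as $O_{m,j}$ is empty for $j>m$) recovers the defining expression for $R_O(m)$. Thus the series above equals $\sum_{m=0}^{\infty}R_O(m)\frac{z^{2m+1}}{(2m+1)!}$.

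On the other hand, summing the geometric series and using $1+\frac12(\cosh z-1)=\frac{1+\cosh z}{2}=\cosh^2\frac z2$ together with $\sinh z=2\sinh\frac z2\cosh\frac z2$, I obtain
\[
\sum_{j=0}^{\infty}\left(-\frac12\right)^{j}\sinh z\,(\cosh z-1)^{j}
=\frac{\sinh z}{\cosh^2\frac z2}=2\tanh\frac z2.
\]
Substituting the Maclaurin series \eqref{Mac_of_tanh} and reindexing $m\mapsto m+1$ gives $2\tanh\frac z2=\sum_{m=0}^{\infty}\frac{4(2^{2m+2}-1)B_{2m+2}}{(2m+2)!}z^{2m+1}$. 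Comparing the coefficient of $z^{2m+1}$ in the two expressions yields $\frac{R_O(m)}{(2m+1)!}=\frac{4(2^{2m+2}-1)B_{2m+2}}{(2m+2)!}$, and simplifying $\frac{4(2m+1)!}{(2m+2)!}=\frac{2}{m+1}$ produces the stated formula.

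The only genuinely new bookkeeping compared with Lemma~\ref{Lem_of_RE} is checking that the single $\sinh z$ factor produces the lone $(2i_0-1)!$ denominator attached to $i_0$ in $O_{m,j}$ while the powers of $\cosh z-1$ produce the even factorials; this is a routine Cauchy-product verification, and the $j=0$ term is simply $\sinh z$ itself, matching the convention that the empty binomial product equals $1$. Convergence and analyticity on $D$, and the coefficient extraction, transfer verbatim from the proof of Lemma~\ref{Lem_of_RE}, so I expect no real obstacle; an alternative would be to deduce the identity from Lemma~\ref{Lem_of_RE} by establishing $R_O(m)=R_E(m)$ directly, but the self-contained generating-function route is no harder.
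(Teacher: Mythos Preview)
Your proposal is correct and follows essentially the same generating-function argument as the paper: form $\sinh z\sum_{j\geq 0}(-\tfrac12)^j(\cosh z-1)^j$, identify it on one hand with $2\tanh\frac z2$ via the geometric series and half-angle identities, and on the other with $\sum_{m\geq 0}R_O(m)\frac{z^{2m+1}}{(2m+1)!}$ by expanding the Cauchy product, then compare coefficients using \eqref{Mac_of_tanh}. The only cosmetic difference is that you pass directly to $\cosh^2\frac z2$ in the denominator, whereas the paper first writes $\frac{2\sinh z}{\cosh z+1}$ before applying the double-angle formulae.
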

\begin{proof}
By
\[
 \sinh z
=\frac{1}{2}(e^{z}-e^{-z})
=\sum_{m=1}^{\infty}\frac{z^{2m-1}}{(2m-1)!}
=\frac{z^1}{1!}+\frac{z^3}{3!}+\cdots
\] 
and
\[
 \cosh z-1=\frac{1}{2}(e^{z}+e^{-z}-2)
=\sum_{m=1}^{\infty}\frac{z^{2m}}{(2m)!}
=\frac{z^2}{2!}+\frac{z^4}{4!}+\cdots
\]
for $z\in \mathbb{C}$,
we have
\begin{align*}
&\phantom{=}\sinh z\sum_{j=0}^{\infty}
 \left(-\frac{1}{2}\right)^j(\cosh z-1)^j \\
&=\left(\frac{z^1}{1!}+\frac{z^3}{3!}+\cdots\right)\sum_{j=0}^{\infty}
 \left(-\frac{1}{2}\right)^j
\left(\frac{z^2}{2!}+\frac{z^4}{4!}+\cdots\right)^j
\end{align*}
which is analytic on $D$.
Then, on one hand, we have
\begin{align*}
\sinh z\sum_{j=0}^{\infty}
\left(-\frac{1}{2}\right)^j(\cosh z-1)^j
&=\frac{\sinh z}{1+\frac{1}{2}(\cosh z-1)} \\
&=2\cdot\frac{\sinh z}{\cosh z+1} \\
&=2\cdot\frac{2\sinh \frac{z}{2}\cosh \frac{z}{2}}{2\cosh^2 \frac{z}{2}} \\
&=2\tanh \frac{z}{2} \\
&=\sum_{m=1}^{\infty}\frac{2^2(2^{2m}-1)B_{2m}}{(2m)!}z^{2m-1} \\
&=\sum_{m=0}^{\infty}
\frac{2(2^{2m+2}-1)B_{2m+2}}{(2m+1)!(m+1)}z^{2m+1}.
\end{align*}
On the other hand, we have
\begin{align*}
&\phantom{=}\left(\frac{z^1}{1!}+\frac{z^3}{3!}+\cdots\right)\sum_{j=0}^{\infty}
\left(-\frac{1}{2}\right)^j
\left(\frac{z^2}{2!}+\frac{z^4}{4!}+\cdots\right)^j \\
&=\sum_{m=0}^\infty\left(\sum_{j=0}^{m}\left(-\frac{1}{2}\right)^{j}
\sum_{(i_0, i_1, \cdots, i_j)\in O_{m, j}}
 \frac{1}{(2i_0-1)!(2i_1)!\cdots(2i_j)!}\right)z^{2m+1}  \\
&=\sum_{m=0}^{\infty}\frac{R_O(m)}{(2m+1)!}z^{2m+1}.
\end{align*}
Hence, we have
 \[
 R_O(m)=\frac{2(2^{2m+2}-1)B_{2m+2}}{m+1}
 \]
as desired.
\end{proof}

By Lemma~{\rm\ref{Lem_of_RE}} and Lemma~{\rm\ref{Lem_of_RO}}, we have
\begin{corollary}
\label{RERO}
The functions $R_E$ and $R_O$ coincide. In fact,
\[
 R_E(m)=R_O(m)=\frac{2(2^{2m+2}-1)B_{2m+2}}{m+1}.
\]
\end{corollary}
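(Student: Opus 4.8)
The plan is to read the corollary off directly from the two lemmas just established. Lemma~\ref{Lem_of_RE} shows that $R_E(m)=\frac{2(2^{2m+2}-1)B_{2m+2}}{m+1}$, and Lemma~\ref{Lem_of_RO} shows that $R_O(m)$ equals the identical expression. Since both functions are equal to one and the same quantity at every non-negative integer $m$, transitivity of equality gives $R_E(m)=R_O(m)$, and this common value is $\frac{2(2^{2m+2}-1)B_{2m+2}}{m+1}$. There is essentially no obstacle here: the entire content resides in the two lemmas, whose generating-function computations each route through the Maclaurin series \eqref{Mac_of_tanh} of $\tanh\frac{z}{2}$.

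The one point I would double-check is the base case $m=0$, since the power-series manipulations in the lemma proofs produce coefficients beginning at $m=1$, whereas $R_E(0)$ and $R_O(0)$ are fixed separately by the defining conventions. By definition $R_E(0)=1$; and for $R_O(0)$ only the term $j=0$ survives, whose product is interpreted as $1$, so $R_O(0)=1$ as well. Meanwhile the closed form at $m=0$ reads $2(2^{2}-1)B_{2}=6B_{2}=1$, so all three values agree and the stated identity is uniform in $m$.

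It is worth noting that the equality $R_E=R_O$ can also be seen directly, without reference to Bernoulli numbers at all, by comparing the two generating functions obtained inside the lemma proofs. There one finds
\[
\sum_{m=1}^{\infty}\frac{R_E(m)}{(2m)!}z^{2m}
=2\frac{d}{dz}\left(\tanh\frac{z}{2}\right)-1
\qquad\text{and}\qquad
\sum_{m=0}^{\infty}\frac{R_O(m)}{(2m+1)!}z^{2m+1}
=2\tanh\frac{z}{2}.
\]
Differentiating the second identity term by term gives $\sum_{m=0}^{\infty}\frac{R_O(m)}{(2m)!}z^{2m}$, and subtracting its constant term $1$ yields exactly the right-hand side $-\tanh^{2}\frac{z}{2}$ of the first identity; comparing coefficients of $z^{2m}$ then forces $R_E(m)=R_O(m)$ for every $m\geq 1$. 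Either route settles the corollary at once, so in the write-up I would simply invoke Lemma~\ref{Lem_of_RE} and Lemma~\ref{Lem_of_RO}.
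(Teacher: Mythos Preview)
Your proposal is correct and matches the paper's approach exactly: the paper states the corollary as an immediate consequence of Lemma~\ref{Lem_of_RE} and Lemma~\ref{Lem_of_RO}, with no further argument. Your extra verification of the case $m=0$ and the alternative generating-function argument are pleasant additions, but the paper omits them and simply cites the two lemmas.
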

In the following, we abbreviate $R=R_E=R_O$.
As stated in the introduction,
we call $R$ 
{\em the recurrence weight\/}.
Here we give two fundamental recurrence formulae for $R$,
of which
one is derived from the definition of $R_E$
and the other from the definition of $R_O$.
\begin{proposition}
\label{Rec_of_RE}
The recurrence weight $R=R_E$ satisfies the relation
\[
 R(m+1)=-\frac{1}{2}\sum_{h=0}^{m}R(h)\binom{2m+2}{2h}
\]
for any non-negative integer $m$.
\end{proposition}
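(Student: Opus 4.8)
The plan is to prove the recurrence directly from the combinatorial definition of $R_E$ by peeling off the outermost binomial factor. Writing
\[
R_E(m+1)=\sum_{j=1}^{m+1}\left(-\frac{1}{2}\right)^j
\sum_{(I_1,\cdots,I_j)\in\tilde{E}_{m+1,j}}
\binom{2I_2}{2I_1}\cdots\binom{2I_j}{2I_{j-1}},
\]
I would use that in $\tilde{E}_{m+1,j}$ the top index is fixed, $I_j=m+1$, and split off the final factor $\binom{2I_j}{2I_{j-1}}=\binom{2m+2}{2I_{j-1}}$, letting $h=I_{j-1}$ range over $1\leq h\leq m$.

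First I would treat $j=1$ separately: here $\tilde{E}_{m+1,1}=\{(m+1)\}$ and the product is interpreted as $1$, so this term contributes $-\tfrac{1}{2}$. For $j\geq 2$, once $h=I_{j-1}$ is fixed the truncated tuple $(I_1,\cdots,I_{j-1})$ ranges exactly over $\tilde{E}_{h,j-1}$ with leftover product $\binom{2I_2}{2I_1}\cdots\binom{2I_{j-1}}{2I_{j-2}}$, so the $j\geq 2$ part of $R_E(m+1)$ becomes
\[
\sum_{j=2}^{m+1}\left(-\frac{1}{2}\right)^j
\sum_{h=1}^{m}\binom{2m+2}{2h}
\sum_{(I_1,\cdots,I_{j-1})\in\tilde{E}_{h,j-1}}
\binom{2I_2}{2I_1}\cdots\binom{2I_{j-1}}{2I_{j-2}}.
\]
Reindexing by $j'=j-1$, interchanging the order of summation, and pulling out a factor $-\tfrac{1}{2}$ gives
\[
-\frac{1}{2}\sum_{h=1}^{m}\binom{2m+2}{2h}
\sum_{j'=1}^{h}\left(-\frac{1}{2}\right)^{j'}
\sum_{(I_1,\cdots,I_{j'})\in\tilde{E}_{h,j'}}
\binom{2I_2}{2I_1}\cdots\binom{2I_{j'}}{2I_{j'-1}}
=-\frac{1}{2}\sum_{h=1}^{m}\binom{2m+2}{2h}R_E(h),
\]
where I use that $\tilde{E}_{h,j'}$ is empty for $j'>h$ and that the inner double sum is precisely $R_E(h)$. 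Finally, absorbing the $j=1$ contribution as the $h=0$ summand (using $R_E(0)=1$ and $\binom{2m+2}{0}=1$) yields
\[
R_E(m+1)=-\frac{1}{2}\sum_{h=0}^{m}\binom{2m+2}{2h}R_E(h),
\]
as claimed.

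The main obstacle I anticipate is bookkeeping rather than conceptual: one must check that stripping the last binomial really does set up a bijection between tuples in $\tilde{E}_{m+1,j}$ with $I_{j-1}=h$ and tuples in $\tilde{E}_{h,j-1}$, that the empty-product conventions at $j=1$ and $j'=0$ are applied consistently, and that the $j=1$ term matches the $h=0$ summand so the sum can be written uniformly from $h=0$. An alternative, slicker route avoids this entirely: the proof of Lemma~\ref{Lem_of_RE} shows that the generating function $F(z)=\sum_{m\geq 0}\frac{R_E(m)}{(2m)!}z^{2m}$ equals $1/(1+w)$ with $w=\frac{1}{4}(e^z+e^{-z}-2)$, so $(1+w)F(z)=1$; comparing the coefficient of $z^{2m+2}$ on both sides (which vanishes for $m\geq 0$) and clearing factorials produces the same recurrence at once.
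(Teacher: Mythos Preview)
Your proof is correct and follows essentially the same combinatorial argument as the paper: both rely on the observation that a tuple $(I_1,\ldots,I_j)\in\tilde{E}_{m+1,j}$ with $j\geq 2$ decomposes as a tuple in $\tilde{E}_{h,j-1}$ together with the top index $m+1$, where $h=I_{j-1}$, and that peeling off the last binomial $\binom{2m+2}{2h}$ reassembles the definition of $R_E(h)$. The only difference is directional---the paper starts from $-\tfrac{1}{2}\sum_{h=0}^{m}R(h)\binom{2m+2}{2h}$ and builds up to $R(m+1)$, whereas you start from $R_E(m+1)$ and unwind to the sum---and your generating-function remark at the end is a valid alternative not pursued in the paper.
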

\begin{proof}
We have
\begin{align*}
 &\phantom{=}-\frac{1}{2}\sum_{h=0}^{m}R(h)\binom{2m+2}{2h} \\
 &=-\frac{1}{2}-\frac{1}{2}\sum_{h=1}^{m}R(h)\binom{2m+2}{2h} \\
 &=-\frac{1}{2}+\sum_{h=1}^{m}
 \sum_{j=1}^h
 \left(-\frac{1}{2}\right)^{j+1}
 \sum_{(I_1, \cdots, I_j)\in \tilde{E}_{h, j}}
 \binom{2I_2}{2I_1}
\cdots\binom{2I_j}{2I_{j-1}}
\binom{2m+2}{2h} \\
 &=-\frac{1}{2}+\sum_{j=1}^{m}
\left(-\frac{1}{2}\right)^{j+1}
\sum_{h=j}^{m}
\ 
\sum_{(I_1, \cdots, I_j)\in \tilde{E}_{h, j}}
\binom{2I_2}{2I_1}
\cdots\binom{2I_j}{2I_{j-1}}
\binom{2m+2}{2I_j}.
\end{align*}
Here note that $\tilde{E}_{m+1, j+1}$ is the disjoint union of
\begin{align*}
 \tilde{E}_{m+1, j+1}^{(h)}&=\{(I_1, \cdots, I_j, I_{j+1})
\in \tilde{E}_{m+1, j+1}\ 
 \vert\ I_j=h\} \\
&=\{(I_1, \cdots, I_j, m+1)\in \mathbb{Z}^{j+1}\ 
 \vert\ (I_1, \cdots, I_j)\in \tilde{E}_{h, j}\}
\end{align*}
for $h=j, j+1, \cdots, m$.
Then, we have
\begin{align*}
 &\phantom{=}-\frac{1}{2}\sum_{h=0}^{m}R(h)\binom{2m+2}{2h} \\
 &=-\frac{1}{2}+\sum_{j=1}^{m}
\left(-\frac{1}{2}\right)^{j+1}
\sum_{h=j}^{m}
\sum_{(I_1, \cdots, I_j, I_{j+1})\in \tilde{E}_{m+1, j+1}^{(h)}}
\binom{2I_2}{2I_1}
\cdots
\binom{2I_{j+1}}{2I_j} \\
 &=-\frac{1}{2}+\sum_{j=1}^{m}
\left(-\frac{1}{2}\right)^{j+1}
\sum_{(I_1, \cdots, I_j, I_{j+1})\in \tilde{E}_{m+1, j+1}}
\binom{2I_2}{2I_1}
\cdots
\binom{2I_{j+1}}{2I_j} \\
 &=-\frac{1}{2}+\sum_{j=2}^{m+1}
\left(-\frac{1}{2}\right)^{j}
\sum_{(I_1, \cdots, I_{j})\in \tilde{E}_{m+1, j}}
\binom{2I_2}{2I_1}
\cdots\binom{2I_j}{2I_{j-1}} \\
 &=\sum_{j=1}^{m+1}
\left(-\frac{1}{2}\right)^{j}
\sum_{(I_1, \cdots, I_{j})\in \tilde{E}_{m+1, j}}
\binom{2I_2}{2I_1}
\cdots\binom{2I_j}{2I_{j-1}} \\
 &=R(m+1)
\end{align*}
as desired.
\end{proof}
\begin{proposition}
\label{Rec_of_RO}
The recurrence weight $R=R_O$ satisfies the relation
\[
 R(m+1)=1-\frac{1}{2}\sum_{h=0}^{m}R(h)\binom{2m+3}{2h+1}
\]
for any non-negative integer $m$.
\end{proposition}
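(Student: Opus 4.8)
The plan is to let this proof parallel the proof of Proposition~\ref{Rec_of_RE}, with the roles of $R_E$, $\tilde{E}_{\bullet,\bullet}$ and $\binom{2m+2}{2h}$ now played by $R_O$, $\tilde{O}_{\bullet,\bullet}$ and $\binom{2m+3}{2h+1}$. That is, I would start from the right-hand side, expand each $R(h)=R_O(h)$ by its definition, interchange the order of summation, recognize $\binom{2m+3}{2h+1}$ as a factor to be appended, and finally use a disjoint-union decomposition of $\tilde{O}_{m+1,j+1}$ to recover $R(m+1)=R_O(m+1)$.

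Concretely, writing $R=R_O$ and substituting the definition of $R_O(h)$ into the right-hand side gives, after interchanging the two summations,
\[
-\frac{1}{2}\sum_{h=0}^{m}R(h)\binom{2m+3}{2h+1}
=\sum_{j=0}^{m}\left(-\frac{1}{2}\right)^{j+1}\sum_{h=j}^{m}
\sum_{(I_0,\cdots,I_j)\in\tilde{O}_{h,j}}
\binom{2I_1-1}{2I_0-1}\cdots\binom{2I_j-1}{2I_{j-1}-1}\binom{2m+3}{2h+1},
\]
where the factor $\left(-\tfrac{1}{2}\right)^{j+1}$ arises from $-\tfrac12\cdot\left(-\tfrac12\right)^{j}$. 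The crucial observation is that membership in $\tilde{O}_{h,j}$ forces $I_j=h+1$, so that, rewriting $2m+3=2(m+2)-1$ and $2h+1=2(h+1)-1=2I_j-1$, the trailing coefficient becomes $\binom{2(m+2)-1}{2I_j-1}$. Appending the fixed coordinate $I_{j+1}=m+2$ to each tuple therefore turns the summand into the full $(j{+}1)$-factor product attached to the tuple $(I_0,\cdots,I_j,m+2)\in\tilde{O}_{m+1,j+1}$.

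To finish, I would invoke that $\tilde{O}_{m+1,j+1}$ is the disjoint union, over $h=j,j+1,\cdots,m$, of the sets of its tuples with $I_j=h+1$ — exactly the analogue of the decomposition of $\tilde{E}_{m+1,j+1}$ used in Proposition~\ref{Rec_of_RE}. Collapsing the sum over $h$ and reindexing $j\mapsto j-1$ then yields $\sum_{j=1}^{m+1}\left(-\tfrac{1}{2}\right)^{j}\sum_{(I_0,\cdots,I_j)\in\tilde{O}_{m+1,j}}\binom{2I_1-1}{2I_0-1}\cdots\binom{2I_j-1}{2I_{j-1}-1}$, which is precisely $R_O(m+1)$ with its $j=0$ term omitted. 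Since that omitted term equals $1$, the displayed sum equals $R(m+1)-1$, and adding the leading $1$ from the statement gives $R(m+1)$.

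The main obstacle — really the only place an error could slip in — is the index matching in the middle step: unlike the even case, where $I_j=h$, here $\tilde{O}_{h,j}$ pins $I_j$ at $h+1$, so the correct identifications are $2m+3=2(m+2)-1$ and $2h+1=2I_j-1$, and the appended coordinate is $m+2$ rather than $m+1$. Keeping this shift straight, and noting that the constant $1$ in the statement is furnished by the $j=0$ term of $R_O(m+1)$ — in contrast to the even case, where the $j=1$ term supplied the constant — is all the argument requires.
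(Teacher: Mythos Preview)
Your proposal is correct and follows essentially the same route as the paper's own proof: expand $R_O(h)$, interchange sums, identify $\binom{2m+3}{2h+1}=\binom{2(m+2)-1}{2I_j-1}$ via $I_j=h+1$, use the disjoint-union decomposition of $\tilde{O}_{m+1,j+1}$ over $h=j,\ldots,m$, reindex, and restore the $j=0$ term as the constant~$1$. You have also correctly flagged the one subtlety, namely the off-by-one shift relative to the even case.
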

\begin{proof}
We have
\begin{align*}
 &\phantom{=}1-\frac{1}{2}\sum_{h=0}^{m}R(h)\binom{2m+3}{2h+1} \\
 &=1+\sum_{h=0}^{m}
 \sum_{j=0}^h \left(-\frac{1}{2}\right)^{j+1}
 \sum_{(I_0, I_1, \cdots, I_j)\in \tilde{O}_{h, j}}
\!\!\!
 \binom{2I_1-1}{2I_0-1}
\cdots\binom{2I_j-1}{2I_{j-1}-1}
 \binom{2m+3}{2h+1} \\
 &=1+\sum_{j=0}^{m}\left(-\frac{1}{2}\right)^{j+1}
 \sum_{h=j}^{m} 
 \ 
 \sum_{(I_0, I_1, \cdots, I_j)\in \tilde{O}_{h, j}}
\!\!\!
 \binom{2I_1-1}{2I_0-1}
\cdots\binom{2I_j-1}{2I_{j-1}-1}
 \binom{2m+3}{2I_j-1}.
\end{align*}
Here note that $\tilde{O}_{m+1, j+1}$ is the disjoint union of
\begin{align*}
 \tilde{O}_{m+1, j+1}^{(h)}&=\{(I_0, I_1, \cdots, I_j, I_{j+1})
\in \tilde{O}_{m+1, j+1}\ 
 \vert\ I_j=h+1\} \\
&=\{(I_0, I_1, \cdots, I_j, m+2)\in \mathbb{Z}^{j+2}\ 
 \vert\ (I_0, I_1, \cdots, I_j)\in \tilde{O}_{h, j}\}
\end{align*}
for $h=j, j+1, \cdots, m$.
Then, we have
\begin{align*}
 &\phantom{=}1-\frac{1}{2}\sum_{h=0}^{m}R(h)\binom{2m+3}{2h+1} \\
 &=1+\sum_{j=0}^{m}\left(-\frac{1}{2}\right)^{j+1}
 \sum_{h=j}^{m} 
 \ 
 \sum_{(I_0, I_1, \cdots, I_j, I_{j+1})\in \tilde{O}_{m+1, j+1}^{(h)}}
 \binom{2I_1-1}{2I_0-1}
 \cdots
 \binom{2I_{j+1}-1}{2I_j-1} \\
 &=1+\sum_{j=0}^{m}\left(-\frac{1}{2}\right)^{j+1}
 \sum_{(I_0, I_1, \cdots, I_j, I_{j+1})\in \tilde{O}_{m+1, j+1}}
 \binom{2I_1-1}{2I_0-1}
 \cdots
 \binom{2I_{j+1}-1}{2I_j-1} \\
 &=1+\sum_{j=1}^{m+1}\left(-\frac{1}{2}\right)^{j}
 \sum_{(I_0, I_1, \cdots, I_j)\in \tilde{O}_{m+1, j}}
 \binom{2I_1-1}{2I_0-1}
 \cdots\binom{2I_j-1}{2I_{j-1}-1} \\
 &=\sum_{j=0}^{m+1}\left(-\frac{1}{2}\right)^{j}
 \sum_{(I_0, I_1, \cdots, I_j)\in \tilde{O}_{m+1, j}}
 \binom{2I_1-1}{2I_0-1}
 \cdots\binom{2I_j-1}{2I_{j-1}-1} \\
 &=R(m+1)
\end{align*}
as desired.
\end{proof}
Thus, we can compute the recurrence weight $R$ recursively as
\begin{align*}
  R(1)&=-\frac{1}{2}R(0)\binom{2}{0}=-\frac{1}{2},\ \\
  R(2)&=-\frac{1}{2}
\left(R(0)\binom{4}{0}+R(1)\binom{4}{2}\right)=1,\ \\
  R(3)&=-\frac{1}{2}
\left(R(0)\binom{6}{0}+R(1)\binom{6}{2}+R(2)\binom{6}{4}\right)
=-\frac{17}{4},\ \\
  R(4)&=-\frac{1}{2}
\left(R(0)\binom{8}{0}+R(1)\binom{8}{2}+R(2)\binom{8}{4}
+R(3)\binom{8}{6}\right)
=31,\ \\
&\cdots
\end{align*}
or as
\begin{align*}
  R(1)&=1-\frac{1}{2}R(0)\binom{3}{1}=-\frac{1}{2},\ \\
  R(2)&=1-\frac{1}{2}
\left(R(0)\binom{5}{1}+R(1)\binom{5}{3}\right)=1,\ \\
  R(3)&=1-\frac{1}{2}
\left(R(0)\binom{7}{1}+R(1)\binom{7}{3}+R(2)\binom{7}{5}\right)
=-\frac{17}{4},\ \\
  R(4)&=1-\frac{1}{2}
\left(R(0)\binom{9}{1}+R(1)\binom{9}{3}+R(2)\binom{9}{5}
+R(3)\binom{9}{7}\right)
=31,\ \\
&\cdots .
\end{align*}
In computing $R$ recursively by hand,
it seems to be easier to use 
Proposition~{\rm\ref{Rec_of_RE}}
than to use Proposition~{\rm\ref{Rec_of_RO}},
though we use the latter
to prove Theorem~{\rm\ref{main}} later.

\begin{corollary}
\label{Rec_of_Ber}
We have
\[
B_{2m+2}
=-\frac{m+1}{2(2^{2m+2}-1)}\sum_{h=1}^{m}
\frac{(2^{2h}-1)B_{2h}}{h}
\binom{2m}{2h-2}
\]
and
\[
B_{2m+2}
=\frac{m+1}{2(2^{2m+2}-1)}\left(1-\sum_{h=1}^{m}
\frac{(2^{2h}-1)B_{2h}}{h}
\binom{2m+1}{2h-1}\right)
\]
for any positive integer $m$.
\end{corollary}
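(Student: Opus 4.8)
The plan is to recognize that the two stated identities are nothing but Propositions~\ref{Rec_of_RE} and~\ref{Rec_of_RO} rewritten in the language of Bernoulli numbers by means of the closed form for $R$ obtained in Corollary~\ref{RERO}. Thus the whole proof should reduce to a single substitution together with careful bookkeeping of constants and indices, and I expect no genuine obstacle beyond this bookkeeping.

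First I would invert the formula $R(m)=\frac{2(2^{2m+2}-1)B_{2m+2}}{m+1}$ of Corollary~\ref{RERO} to read
\[
 B_{2m+2}=\frac{(m+1)R(m)}{2(2^{2m+2}-1)},
\]
so that the left-hand sides of both target identities become multiples of $R(m)$. The second ingredient is the observation that the summand $\frac{(2^{2h}-1)B_{2h}}{h}$ is exactly $\tfrac{1}{2}R(h-1)$; indeed, setting the argument of the same closed form equal to $h-1$ gives $R(h-1)=\frac{2(2^{2h}-1)B_{2h}}{h}$. Hence every Bernoulli number occurring in the corollary can be traded for a value of the recurrence weight.

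Next I would substitute these two expressions into the desired identities. For the first identity, replacing $B_{2m+2}$ on the left and each $B_{2h}$ on the right, then cancelling the common factor $\frac{m+1}{2(2^{2m+2}-1)}$ and reindexing by $h'=h-1$ (so that $\binom{2m}{2h-2}=\binom{2m}{2h'}$), reduces the claim to
\[
 R(m)=-\frac{1}{2}\sum_{h'=0}^{m-1}R(h')\binom{2m}{2h'},
\]
which is precisely Proposition~\ref{Rec_of_RE} with $m$ replaced by $m-1$ (legitimate since $m\geq 1$). For the second identity the same substitution and cancellation, again with $h'=h-1$ and $\binom{2m+1}{2h-1}=\binom{2m+1}{2h'+1}$, reduces the claim to
\[
 R(m)=1-\frac{1}{2}\sum_{h'=0}^{m-1}R(h')\binom{2m+1}{2h'+1},
\]
which is Proposition~\ref{Rec_of_RO} with $m$ replaced by $m-1$. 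Both reduced identities hold by those propositions, so the corollary follows.

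The only point demanding care is tracking the factor $\tfrac{1}{2}$ hidden in the identity $\frac{(2^{2h}-1)B_{2h}}{h}=\tfrac{1}{2}R(h-1)$ together with the overall $\tfrac{1}{2}$ already present in the propositions, and confirming that the shift $m\mapsto m-1$ stays within the stated range; beyond this, the argument is purely mechanical.
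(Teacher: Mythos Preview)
Your proof is correct and follows essentially the same route as the paper: both substitute the closed form $R(m)=\frac{2(2^{2m+2}-1)B_{2m+2}}{m+1}$ from Corollary~\ref{RERO} into the recurrences of Propositions~\ref{Rec_of_RE} and~\ref{Rec_of_RO} (shifted by $m\mapsto m-1$) and reindex. The only cosmetic difference is that you reduce the target identities to the propositions, whereas the paper derives the identities from the propositions; the bookkeeping is identical.
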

\begin{proof}
In fact, by Corollary~{\rm\ref{RERO}} 
and Proposition~{\rm\ref{Rec_of_RE}}, we have
\begin{align*}
 \frac{2(2^{2m+2}-1)B_{2m+2}}{m+1}
&=-\frac{1}{2}\sum_{h=0}^{m-1}
 \frac{2(2^{2h+2}-1)B_{2h+2}}{h+1}
 \binom{2m}{2h} \\
&=-\sum_{h=1}^{m}
 \frac{(2^{2h}-1)B_{2h}}{h}
 \binom{2m}{2h-2}
\end{align*}
and hence,
\[
B_{2m+2}
=-\frac{m+1}{2(2^{2m+2}-1)}\sum_{h=1}^{m}
\frac{(2^{2h}-1)B_{2h}}{h}
\binom{2m}{2h-2}.
\]
Thus,
the former follows.
Similarly,
the latter follows 
from Corollary~{\rm\ref{RERO}} and Proposition~{\rm\ref{Rec_of_RO}}.
\end{proof}
The author cannot find these recurrence formulae in the literature.
We can deduce other recurrence formulae for the recurrence weight $R$
and then we can transform them to ones 
for Bernoulli numbers by Corollary~{\rm\ref{RERO}}.
In the following, 
for example,
we give two other 
simple recurrence formulae for the recurrence weight $R$.
\begin{proposition}
\label{Rec_of_RERO}
The recurrence weight $R=R_E=R_O$ satisfies the relation
\[
 R(m)=\sum_{h=0}^{m}R(h)\binom{2m+1}{2h}
\]
and hence, 
it satisfies the relation
\[
 R(m)=-\frac{1}{2m}\sum_{h=0}^{m-1}R(h)\binom{2m+1}{2h}
\]
for any positive integer $m$.
\end{proposition}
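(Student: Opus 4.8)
The plan is to prove the first identity by a generating-function argument, extracting the coefficient of $z^{2m+1}$ from a product of the two power series already computed in Lemmas~\ref{Lem_of_RE} and~\ref{Lem_of_RO}; the second identity will then drop out by isolating the top term of the resulting sum.

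First I would assemble the two relevant generating functions. Recalling $R(0)=1$ and using Lemma~\ref{Lem_of_RE}, the even series is
\[
F(z)=\sum_{m=0}^{\infty}\frac{R(m)}{(2m)!}z^{2m}
=1-\tanh^2\frac{z}{2}=\frac{1}{\cosh^2\frac{z}{2}},
\]
while Lemma~\ref{Lem_of_RO} (together with Corollary~\ref{RERO}) gives the odd series
\[
G(z)=\sum_{m=0}^{\infty}\frac{R(m)}{(2m+1)!}z^{2m+1}
=2\tanh\frac{z}{2}.
\]
Both are analytic in a neighbourhood of $0$, so their Taylor coefficients may be compared freely.

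The key step is the hyperbolic identity
\[
F(z)\sinh z
=\frac{2\sinh\frac{z}{2}\cosh\frac{z}{2}}{\cosh^2\frac{z}{2}}
=2\tanh\frac{z}{2}
=G(z),
\]
which uses only $\sinh z=2\sinh\frac{z}{2}\cosh\frac{z}{2}$. Since $\sinh z=\sum_{k\geq 0}z^{2k+1}/(2k+1)!$, I would form the Cauchy product of $F(z)$ with $\sinh z$ and read off the coefficient of $z^{2m+1}$ on both sides, obtaining
\[
\sum_{h=0}^{m}\frac{R(h)}{(2h)!\,(2m-2h+1)!}
=\frac{R(m)}{(2m+1)!}.
\]
Multiplying through by $(2m+1)!$ and recognising $(2m+1)!/\bigl((2h)!\,(2m+1-2h)!\bigr)=\binom{2m+1}{2h}$ yields the first asserted formula $R(m)=\sum_{h=0}^{m}R(h)\binom{2m+1}{2h}$. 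To deduce the second formula I would isolate the $h=m$ term: since $\binom{2m+1}{2m}=2m+1$, the identity reads $R(m)=(2m+1)R(m)+\sum_{h=0}^{m-1}R(h)\binom{2m+1}{2h}$, so $-2mR(m)=\sum_{h=0}^{m-1}R(h)\binom{2m+1}{2h}$, and dividing by $-2m$ (legitimate as $m\geq 1$) gives the stated closed form.

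I expect no serious obstacle here: the whole argument rests on the single observation that the even and odd generating functions are linked by $F(z)\sinh z=G(z)$. The only point demanding care is the parity bookkeeping in the Cauchy product, namely matching $2h+(2k+1)=2m+1$ so that the factorials collapse cleanly into the single binomial coefficient $\binom{2m+1}{2h}$.
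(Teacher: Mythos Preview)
Your argument is correct and complete. The generating functions $F(z)=1/\cosh^2(z/2)$ and $G(z)=2\tanh(z/2)$ are exactly what the proofs of Lemma~\ref{Lem_of_RE} and Lemma~\ref{Lem_of_RO} establish, the identity $F(z)\sinh z=G(z)$ follows from the double-angle formula for $\sinh$, and the coefficient extraction and final algebraic isolation of the top term are both carried out correctly.

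This is, however, a genuinely different route from the paper's. The paper gives a direct combinatorial proof: it starts from the defining sum for $R_O(m)$, applies the elementary binomial identity $\binom{b}{c}\binom{a}{b}=\binom{a}{c}\binom{a-c}{a-b}$ repeatedly to rewrite the chain of odd binomials as $\binom{2m+1}{2(I_j-I_0)}$ times a chain of even binomials, and then uses the change of variables $(I'_1,\dots,I'_j)=(I_j-I_{j-1},\dots,I_j-I_0)$, which sets up a bijection from $\tilde{O}_{m,j}$ onto the disjoint union of the sets $\tilde{E}_{h,j}$ for $j\leq h\leq m$. This converts the $R_O$ sum directly into $\sum_h R_E(h)\binom{2m+1}{2h}$ without ever invoking the closed form $R(m)=2(2^{2m+2}-1)B_{2m+2}/(m+1)$ or any hyperbolic identity. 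Your approach is shorter and in the same spirit as the paper's own proof of Proposition~\ref{Rec_of_comparison}, but it relies on the analytic identifications already obtained in Lemmas~\ref{Lem_of_RE} and~\ref{Lem_of_RO}; the paper's argument, by contrast, is a self-contained bijective proof that $R_E=R_O$ alone forces the recurrence, independent of the Bernoulli-number closed form.
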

\begin{proof}
Since we have
\begin{equation}
\label{binom}
\binom{b}{c}\binom{a}{b}=\binom{a}{c}\binom{a-c}{a-b}
\end{equation}
for any non-negative integers $a, b, c$ such that $c\leq b\leq a$,
we can see that
for $(I_0, I_1, \cdots, I_j)\in \tilde{O}_{m, j}$,
\begin{align*}
&\phantom{=}
\binom{2I_1-1}{2I_0-1}
\cdots
\binom{2I_{j-2}-1}{2I_{j-3}-1}
\binom{2I_{j-1}-1}{2I_{j-2}-1}
\binom{2I_j-1}{2I_{j-1}-1} \\
&=\binom{2I_1-1}{2I_0-1}
\cdots
\binom{2I_{j-2}-1}{2I_{j-3}-1}
\binom{2I_{j}-1}{2I_{j-2}-1}\binom{2(I_j-I_{j-2})}{2(I_{j}-I_{j-1})} \\
&=\binom{2I_1-1}{2I_0-1}
\cdots
\binom{2I_{j}-1}{2I_{j-3}-1}
\binom{2(I_{j}-I_{j-3})}{2(I_{j}-I_{j-2})}
\binom{2(I_{j}-I_{j-2})}{2(I_{j}-I_{j-1})} \\
&=\cdots \\
&=\binom{2I_{j}-1}{2I_{0}-1}
\binom{2(I_{j}-I_{0})}{2(I_{j}-I_{1})}
\binom{2(I_{j}-I_{1})}{2(I_{j}-I_{2})}
\cdots
\binom{2(I_{j}-I_{j-2})}{2(I_{j}-I_{j-1})} \\
&=\binom{2m+1}{2(I_{j}-I_{0})}
\binom{2(I_{j}-I_{0})}{2(I_{j}-I_{1})}
\binom{2(I_{j}-I_{1})}{2(I_{j}-I_{2})}
\cdots
\binom{2(I_{j}-I_{j-2})}{2(I_{j}-I_{j-1})} \\
&=\binom{2m+1}{2(I_{j}-I_{0})}
\binom{2(I_{j}-I_{j-2})}{2(I_{j}-I_{j-1})}
\cdots
\binom{2(I_{j}-I_{1})}{2(I_{j}-I_{2})}
\binom{2(I_{j}-I_{0})}{2(I_{j}-I_{1})}.
\end{align*}
Thus, we have
\begin{align*}
&\phantom{=}
\sum_{j=1}^m \left(-\frac{1}{2}\right)^j
\sum_{(I_0, I_1, \cdots, I_j)\in \tilde{O}_{m, j}}
\binom{2I_1-1}{2I_0-1}
\cdots
\binom{2I_j-1}{2I_{j-1}-1} \\
&=\sum_{j=1}^m \left(-\frac{1}{2}\right)^j
\sum_{(I_0, I_1, \cdots, I_j)\in \tilde{O}_{m, j}}
\binom{2m+1}{2(I_{j}-I_{0})}
\binom{2(I_{j}-I_{j-2})}{2(I_{j}-I_{j-1})}
\cdots
\binom{2(I_{j}-I_{0})}{2(I_{j}-I_{1})}.
\end{align*}
Here let
\[
 (I_{j}-I_{j-1}, I_{j}-I_{j-2}, \cdots, I_{j}-I_{1}, I_{j}-I_{0})
=(I'_1, I'_2, \cdots, I'_j).
\]
Note that as $(I_0, I_1, \cdots, I_j)$ runs over $\tilde{O}_{m, j}$
with $1\leq j\leq m$,
$(I'_1, I'_2, \cdots, I'_j)$
bijectively runs over $\tilde{E}_{h, j}$
with $1\leq j\leq h\leq m$.
Then, 
it follows that
the above sum is equal to
\begin{align*}
&\phantom{=}\sum_{h=1}^{m}
 \binom{2m+1}{2h}
 \left(
 \sum_{j=1}^{h}
 \left(-\frac{1}{2}\right)^j
 \sum_{(I'_1, I'_2 \cdots, I'_j)\in \tilde{E}_{h, j}}
 \binom{2I'_2}{2I'_1}
 \cdots
 \binom{2I'_{j}}{2I'_{j-1}}
 \right) \\
&=\sum_{h=1}^{m}R(h)\binom{2m+1}{2h}.
\end{align*}
Thus we have
\begin{align*}
 R(m)&=\sum_{j=0}^m \left(-\frac{1}{2}\right)^j
 \sum_{(I_0, I_1, \cdots, I_j)\in \tilde{O}_{m, j}}
\binom{2I_1-1}{2I_0-1}
\cdots\binom{2I_j-1}{2I_{j-1}-1} \\
&=1+\sum_{j=1}^m \left(-\frac{1}{2}\right)^j
\sum_{(I_0, I_1, \cdots, I_j)\in \tilde{O}_{m, j}}
\binom{2I_1-1}{2I_0-1}
\cdots\binom{2I_j-1}{2I_{j-1}-1} \\
&=1+\sum_{h=1}^{m}R(h)\binom{2m+1}{2h} \\
&=\sum_{h=0}^{m}R(h)\binom{2m+1}{2h}
\end{align*}
as desired.
\end{proof}
\begin{corollary}
\label{Rec_of_Ber2}
We have
\[
B_{2m+2}=
-\frac{m+1}{2m(2^{2m+2}-1)}
\sum_{h=1}^{m}\frac{(2^{2h}-1)B_{2h}}{h}\binom{2m+1}{2h-2}
\]
for any positive integer $m$.
\end{corollary}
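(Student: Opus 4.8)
The plan is to mimic exactly the derivation of Corollary~\ref{Rec_of_Ber}, only now feeding the closed form of $R$ into the \emph{second} recurrence of Proposition~\ref{Rec_of_RERO} rather than into Proposition~\ref{Rec_of_RE}. So I would begin from the relation
\[
R(m)=-\frac{1}{2m}\sum_{h=0}^{m-1}R(h)\binom{2m+1}{2h}
\]
valid for any positive integer $m$, which is the last assertion of Proposition~\ref{Rec_of_RERO}. By Corollary~\ref{RERO} we may substitute $R(i)=\dfrac{2(2^{2i+2}-1)B_{2i+2}}{i+1}$ both on the left-hand side (with $i=m$) and in every summand (with $i=h$), obtaining
\[
\frac{2(2^{2m+2}-1)B_{2m+2}}{m+1}
=-\frac{1}{2m}\sum_{h=0}^{m-1}
\frac{2(2^{2h+2}-1)B_{2h+2}}{h+1}\binom{2m+1}{2h}.
\]

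Next I would reindex the sum by replacing the summation variable $h$ with $h-1$, so that the range $0\le h\le m-1$ becomes $1\le h\le m$ and the summand transforms via $2^{2h+2}\mapsto 2^{2h}$, $B_{2h+2}\mapsto B_{2h}$, $h+1\mapsto h$, and $\binom{2m+1}{2h}\mapsto\binom{2m+1}{2h-2}$. This yields
\[
\frac{2(2^{2m+2}-1)B_{2m+2}}{m+1}
=-\frac{1}{2m}\sum_{h=1}^{m}
\frac{2(2^{2h}-1)B_{2h}}{h}\binom{2m+1}{2h-2}.
\]
Multiplying both sides by $\dfrac{m+1}{2(2^{2m+2}-1)}$ to isolate $B_{2m+2}$, the factor $2$ in each summand cancels against the factor $\tfrac{1}{2}$ coming from $\tfrac{1}{2(2^{2m+2}-1)}$, and one is left precisely with
\[
B_{2m+2}=
-\frac{m+1}{2m(2^{2m+2}-1)}
\sum_{h=1}^{m}\frac{(2^{2h}-1)B_{2h}}{h}\binom{2m+1}{2h-2},
\]
which is the claimed identity.

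There is no genuine obstacle here: the entire argument is a substitution followed by an index shift and a cancellation of the common factor $2$, exactly parallel to the proof of Corollary~\ref{Rec_of_Ber}. The only point requiring a moment of care is the bookkeeping in the reindexing, namely checking that the shift $h\mapsto h-1$ simultaneously sends $h+1$ to $h$ and $\binom{2m+1}{2h}$ to $\binom{2m+1}{2h-2}$ while keeping the lower summation limit consistent (the $h=0$ term of the original sum, which would produce the shifted index $h=1$, is included, and no boundary term is lost). Once this is verified the result is immediate.
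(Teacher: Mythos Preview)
Your proof is correct and is exactly the approach the paper intends: the paper's own proof is the single line ``This follows from Corollary~\ref{RERO} and the previous proposition,'' and you have carried out precisely that substitution of the closed form for $R$ into the second recurrence of Proposition~\ref{Rec_of_RERO}, followed by the obvious reindexing. The bookkeeping you flagged is all correct.
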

\begin{proof}
This follows from Corollary~{\rm\ref{RERO}}
and the previous proposition.
\end{proof}
\begin{proposition}
\label{Rec_of_comparison}
The recurrence weight $R$ satisfies the relation
\[
 \sum_{h=0}^{m}2^{2h}R(h)\binom{2m+1}{2h+1}=1
\]
for any non-negative integer $m$.
\end{proposition}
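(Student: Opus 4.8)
The plan is to use the generating-function description of $R$ that was already obtained in the proof of Lemma~\ref{Lem_of_RO}, namely the identity
\[
 \sum_{m=0}^{\infty}\frac{R(m)}{(2m+1)!}z^{2m+1}=2\tanh\frac{z}{2},
\]
valid on the neighborhood $D$ of $0$. First I would rescale this by substituting $z\mapsto 2z$ and dividing by $2$, which yields
\[
 \sum_{h=0}^{\infty}\frac{2^{2h}R(h)}{(2h+1)!}z^{2h+1}=\tanh z
\]
on a suitable neighborhood of $0$. The purpose of the rescaling is precisely to manufacture the factor $2^{2h}$ that appears in the statement of the proposition.

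Next I would recognize the left-hand side of the asserted identity as a Cauchy-product coefficient. Writing $\binom{2m+1}{2h+1}=\frac{(2m+1)!}{(2h+1)!\,(2m-2h)!}$, the proposition is equivalent to
\[
 \sum_{h=0}^{m}\frac{2^{2h}R(h)}{(2h+1)!}\cdot\frac{1}{(2m-2h)!}=\frac{1}{(2m+1)!},
\]
and the left-hand side is exactly the coefficient of $z^{2m+1}$ in the product of the series for $\tanh z$ above with $\cosh z=\sum_{k=0}^{\infty}\frac{z^{2k}}{(2k)!}$. Since $\tanh z\cdot\cosh z=\sinh z=\sum_{m=0}^{\infty}\frac{z^{2m+1}}{(2m+1)!}$, comparing coefficients of $z^{2m+1}$ gives the displayed identity, and multiplying through by $(2m+1)!$ recovers the proposition. (As a sanity check, the case $m=0$ reduces to $R(0)=1$, which holds by definition.)

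The routine parts are the rescaling and the coefficient comparison; the only point requiring a word of care is convergence, namely that $\tanh z$ and $\cosh z$ are analytic on a common neighborhood of $0$ so that the Cauchy product may be taken term by term. This is immediate, since $\cosh z$ is entire and $\tanh z$ is analytic near $0$, so no genuine obstacle arises. The main thing to watch is the bookkeeping of the power-of-two rescaling, so that the factor $2^{2h}$ is placed correctly. (One could alternatively attempt a proof by induction using Proposition~\ref{Rec_of_RE} and Proposition~\ref{Rec_of_RO}, but the generating-function argument is more direct and avoids manipulating double sums.)
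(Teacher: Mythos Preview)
Your proof is correct and is essentially the same as the paper's: both use the generating-function identity $\sum_{m\ge0}\frac{R(m)}{(2m+1)!}z^{2m+1}=2\tanh\frac{z}{2}$ from the proof of Lemma~\ref{Lem_of_RO} together with $\tanh\cdot\cosh=\sinh$, and then compare coefficients. The only cosmetic difference is that you substitute $z\mapsto 2z$ first to work with $\tanh z\cdot\cosh z=\sinh z$, whereas the paper keeps the half-argument and multiplies $\tanh\frac{z}{2}$ by the series for $\cosh\frac{z}{2}$ directly, absorbing the powers of $2$ in the coefficients; the resulting coefficient identities are the same.
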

\begin{proof}
By Lemma~{\rm\ref{Lem_of_RO}} and its proof,
\begin{align*}
&\phantom{=}\sinh z\sum_{j=0}^{\infty}
 \left(-\frac{1}{2}\right)^j(\cosh z-1)^j \\
&=\left(\frac{z^1}{1!}+\frac{z^3}{3!}+\cdots\right)\sum_{j=0}^{\infty}
 \left(-\frac{1}{2}\right)^j
 \left(\frac{z^2}{2!}+\frac{z^4}{4!}+\cdots\right)^j
\end{align*}
has an analytic continuation
\[
 2\tanh \frac{z}{2}=\sum_{m=0}^{\infty}\frac{R(m)}{(2m+1)!}z^{2m+1}
\]
on
\[
 \tilde{D}=\{z\in\mathbb{C}\ \vert\ \lvert z\rvert<\pi\}.
\]
Then,
by 
\begin{equation}
\label{tanhcoshsinh}
\tanh \frac{z}{2} \cosh \frac{z}{2}=\sinh \frac{z}{2}
\end{equation}
on $\tilde{D}$, we have
\[
 \left(\frac{1}{2}\sum_{m=0}^{\infty}\frac{R(m)}{(2m+1)!}z^{2m+1}\right)
\left(\sum_{m=0}^{\infty}\frac{z^{2m}}{2^{2m}(2m)!}\right)
=\sum_{m=1}^{\infty}\frac{z^{2m-1}}{2^{2m-1}(2m-1)!}
\]
and hence
\[
 \frac{1}{2}\sum_{h=0}^{m}\frac{R(h)}{(2h+1)!}
\cdot\frac{1}{2^{2m-2h}(2m-2h)!}
=\frac{1}{2^{2m+1}(2m+1)!}.
\]
It follows that
\[
\sum_{h=0}^{m}
2^{2h}R(h)\frac{(2m+1)!}{(2h+1)!(2m-2h)!}
=\sum_{h=0}^{m}
2^{2h}R(h)\binom{2m+1}{2h+1}
=1
\]
as desired.
\end{proof}
The recurrence formula of Bernoulli numbers
which is derived from
Corollary~{\rm\ref{RERO}} 
and this proposition
is of course merely the one derived from
the Maclaurin series \eqref{Mac_of_tanh}
of $\tanh \frac{z}{2}$, 
those of
$\cosh \frac{z}{2}$ and $\sinh \frac{z}{2}$,
and the relation \eqref{tanhcoshsinh}.

\section{Proof of Theorem~{\rm\ref{main}}}
\label{pf_of_main}
In this section,
we prove Theorem~{\rm\ref{main}}.

Recall that 
Stirling numbers of the second kind
$S(n, k)$ satisfy the conditions that
\[
 S(n, 1)=1,\quad S(n, k)=0
\]
for any positive integers $n$ and $k$
such that $n<k$, 
and also satisfy
the recurrence formula
\begin{equation}
\label{Rec_of_Stirling}
S(n+1, k+1)=S(n, k)+(k+1)S(n, k+1)
\end{equation}
for any positive integers $n$ and $k$
such that $k\leq n$.
It is known that Stirling numbers 
of the second kind $S(n, k)$ satisfy 
many combinatorial identities
which are derived from \eqref{Rec_of_Stirling}. 
(See Quaintance and Gould~\cite{QG_2016}.)
Here, 
we use the identity 
\[
 \tilde{S}(n, k)
=\sum_{j=k}^{n}(-1)^{n-j}\binom{j-1}{k-1}
\tilde{S}(n, j)
\]
for any positive integers $n$ and $k$ such that $k\leq n$.
(See Quaintance and Gould~\cite[(9.18)]{QG_2016}.)
In particular,
for any odd integer $n\geq 3$ and any positive even integer $k$
such that $k<n$, 
we have
\[
 \tilde{S}(n, k)
=-\tilde{S}(n, k)+\sum_{j=k+1}^{n}(-1)^{n-j}\binom{j-1}{k-1}
\tilde{S}(n, j)
 \]
and hence
\begin{equation}
\label{seed_again}
 \tilde{S}(n, k)
=\frac{1}{2}\sum_{j=k+1}^{n}(-1)^{n-j}\binom{j-1}{k-1}
\tilde{S}(n, j).
\end{equation}

Now we prove Theorem~{\rm\ref{main}}. 
In the following,
we prove the equation 
\[
 \tilde{S}(k+2m+1, k)
=\frac{1}{2}\sum_{h=0}^{m}R(h)
\binom{k+2h}{k-1}\tilde{S}(k+2m+1, k+2h+1)
\]
for any non-negative integer $m$
and any positive even integer $k$,
which is a paraphrase of Theorem~{\rm\ref{main}},
by induction on $m$.

For $m=0$, the assertion is obvious by \eqref{seed_again}.
Suppose that the assertion is proved 
for $m\leq a$
where $a$ is a non-negative integer.
In the following,
we abbreviate $\tilde{S}(k+2(a+1)+1, j)=S'(j)$.
Then, 
by \eqref{seed_again},
we have
\begin{align*}
&\phantom{=}S'(k)
=\frac{1}{2}\sum_{j=k+1}^{k+2(a+1)+1}(-1)^{1-j}\binom{j-1}{k-1}S'(j) \\
&=\frac{1}{2}\binom{k}{k-1}S'(k+1)
+\frac{1}{2}\sum_{h=1}^{a+1}\binom{k+2h}{k-1}S'(k+2h+1) \\
&\qquad \qquad \qquad \qquad \qquad \qquad \qquad
-\frac{1}{2}\sum_{i=1}^{a+1}\binom{k+2i-1}{k-1}S'(k+2i).
\end{align*}
Here, 
we have
\begin{align*}
&\phantom{=}
\sum_{i=1}^{a+1}\binom{k+2i-1}{k-1}S'(k+2i) \\
&=\sum_{i=1}^{a+1}\binom{k+2i-1}{k-1}
\left(
\frac{1}{2}\sum_{\ell=0}^{a+1-i}R(\ell)
\binom{k+2i+2\ell}{k+2i-1}S'(k+2i+2\ell+1)
\right) \\
&\qquad \qquad \qquad \qquad \qquad \qquad 
\qquad \qquad \qquad \qquad
\text{(by the induction hypothesis)} \\
&=\frac{1}{2}
\sum_{i=1}^{a+1}
\sum_{\ell=0}^{a+1-i}
R(\ell)
\binom{k+2i-1}{k-1}
\binom{k+2i+2\ell}{k+2i-1}S'(k+2i+2\ell+1)  \\
&=\frac{1}{2}
\sum_{i=1}^{a+1}
\sum_{\ell=0}^{a+1-i}
R(\ell)
\binom{2i+2\ell+1}{2\ell+1}
\binom{k+2i+2\ell}{k-1}S'(k+2i+2\ell+1) 
\ \  \text{(by \eqref{binom})} \\
&=\frac{1}{2}
\sum_{h=1}^{a+1}
 \sum_{\ell=0}^{h-1}
R(\ell)
\binom{2h+1}{2\ell+1}
\binom{k+2h}{k-1}S'(k+2h+1)
\quad  \text{(by letting $i+\ell=h$)}.
\end{align*}
Hence, we have
\begin{align*}
S'(k)&=\frac{1}{2}\binom{k}{k-1}S'(k+1)
+\frac{1}{2}\sum_{h=1}^{a+1}\binom{k+2h}{k-1}S'(k+2h+1) \\
&\qquad \qquad \qquad -\frac{1}{2^2}
\sum_{h=1}^{a+1}
\sum_{\ell=0}^{h-1}
R(\ell)
\binom{2h+1}{2\ell+1}
\binom{k+2h}{k-1}S'(k+2h+1) \\
&=\frac{1}{2}\binom{k}{k-1}S'(k+1) \\
&\qquad \qquad +\frac{1}{2}
\sum_{h=1}^{a+1}
\left(
1-\frac{1}{2}\sum_{\ell=0}^{h-1}
R(\ell)
\binom{2h+1}{2\ell+1}
\right)
\binom{k+2h}{k-1}S'(k+2h+1) \\
&=\frac{1}{2}\binom{k}{k-1}S'(k+1)
+\frac{1}{2}
\sum_{h=1}^{a+1}
R(h)
\binom{k+2h}{k-1}S'(k+2h+1) \\
&\qquad \qquad \qquad \qquad \qquad \qquad \qquad \qquad 
\qquad \qquad \qquad 
\text{(by Proposition~{\rm\ref{Rec_of_RO}})} \\
&=\frac{1}{2}
\sum_{h=0}^{a+1}
R(h)
\binom{k+2h}{k-1}S'(k+2h+1),
\end{align*}
that is,
\[
 \tilde{S}(k+2(a+1)+1, k)=\frac{1}{2}
\sum_{h=0}^{a+1}
R(h)
\binom{k+2h}{k-1}\tilde{S}(k+2(a+1)+1, k+2h+1).
\]
Thus, 
by induction on $m$,
Theorem~{\rm\ref{main}} follows.

\section{Proof of Corollary~{\rm\ref{gcd}}}
\label{pf_of_gcd}
In this section,
we prove Corollary~{\rm\ref{gcd}}.

Let 
\[
 t=\nu_p(\Delta_{n, k+1})
=\min\{\nu_p(\tilde{S}(n, j))\ \vert\ j=k+1, k+2, \cdots, n\}.
\]
As stated in the introduction,
to prove Corollary~{\rm\ref{gcd}},
it suffices to show that
$\nu_p(\tilde{S}(n, k))\geq t$
for any odd integer $n\geq 3$,
any positive even integer $k$
such that $k<n$,
and any prime $p$.
In other words, 
it suffices to show that
$\tilde{S}(n, k)$ is divisible by $p^t$.

First let $p$ be an odd prime. 
Consider the equation \eqref{seed_again}:
\[
 \tilde{S}(n, k)
=\frac{1}{2}\sum_{j=k+1}^{n}(-1)^{n-j}\binom{j-1}{k-1}
\tilde{S}(n, j).
\]
Then, 
since the right hand side is 
of course an integer and
is divisible by $p^t$,
so is the left hand side.

In the rest of this section, 
let $p=2$.
Consider the equation of Theorem~{\rm\ref{main}}:
\[
 \tilde{S}(n, k)
=\frac{1}{2}\sum_{h=0}^{(n-k-1)/2}R(h)
\binom{k+2h}{k-1}\tilde{S}(n, k+2h+1).
\]
Since $\tilde{S}(n, k+2h+1)$ is divisible by $2^t$,
it suffices to show that
\[
 \nu_2\left(\frac{1}{2}R(h)\binom{k+2h}{k-1}\right)\geq 0
\]
for $h=0, 1, \cdots, (n-k-1)/2$
where $\nu_2(q_1/q_2)=\nu_2(q_1)-\nu_2(q_2)$
for a non-zero rational number 
$q_1/q_2\in\mathbb{Q},\ 0\ne q_1\in \mathbb{Z},\ 0\ne q_2\in \mathbb{Z}$.
As stated in the introduction,
we invoke the theorem of Staudt~\cite{Staudt_1840} 
and Clausen~\cite{Clausen_1840}
which states that
\[
 B'_{2m+2}=\prod_{p:\text{prime}, \frac{2m+2}{p-1}\in\mathbb{Z}}p.
\]
Hence $\nu_2(B_{2h+2})=-1$ and
\[
 \nu_2(R(h))=\nu_2\left(\frac{2(2^{2h+2}-1)B_{2h+2}}{h+1}\right)
=\nu_2\left(\frac{1}{h+1}\right).
\]
It follows that
\begin{align*}
\nu_2\left(\frac{1}{2}R(h)\binom{k+2h}{k-1}\right)
&=\nu_2\left(\frac{1}{2}\cdot \frac{1}{h+1}
\cdot\frac{(k+2h)!}{(k-1)!(2h+1)!}\right) \\
&=\nu_2\left(\frac{(k+2h+1)!}{(k-1)!(2h+2)!}\cdot\frac{1}{k+2h+1}\right) \\
&=\nu_2\left(\binom{k+2h+1}{k-1}\right) \\
&\geq 0,
\end{align*}
which completes the proof.

\section{Proof of Theorem~{\rm\ref{unstable}}}
\label{pf_of_unstable}
In this section, 
we recall some facts on the unstable $K$-theory
from Hamanaka and Kono~\cite{HK_2003} and then,
we prove Theorem~{\rm\ref{unstable}}.
Also we give some examples of Theorem~{\rm\ref{unstable}}
and a characterization of $b_k=M_k$
in terms of the sequence \eqref{Useq}.

As stated in the introduction,
let $\mathbb{C}P^n$ be the complex projective space
and $\mathbb{C}P^n_k=\mathbb{C}P^n/\mathbb{C}P^{k-1}.$
Let $U(n)$ be the unitary group and
$U_n(X)=[X, U(n)]_*$.
According to Hamanaka and Kono~\cite{HK_2003},\ 
for a CW-complex $X$ of dimension
less than or equal to $2n$,\ 
the group $U_n(X)$ fits into an exact sequence
\begin{equation}
\label{HKseq}
 \tilde{K}^0(X)\xrightarrow{\Theta} H^{2n}(X)\to U_n(X)\to 
\tilde{K}^{-1}(X)\to 0 
\end{equation}
of groups. 
Here $H^{\ell}(X)$ and $\tilde{K}^{\ell}(X)$ are 
the $\ell$th integral cohomology group of $X$
and the $\ell$th reduced complex $K$ cohomology group
of $X$ respectively,
and the homomorphism $\Theta$ is given as follows.

Let $T$ be a maximal torus of $U(n)$,
$i\colon T\to U(n)$ the inclusion,
$BT$ and $BU(n)$ the classifying spaces 
of $T$ and $U(n)$ respectively, 
and $Bi\colon BT\to BU(n)$ the map
induced by $i$.
Recall that
\[
 H^*(BT)=\mathbb{Z}[t_1, t_2, \cdots, t_n],
\]
the polynomial algebra generated by
$t_1, t_2, \cdots, t_n\in H^2(BT)$.
Let $W(U(n))$ be the Weyl group of $U(n)$
which is isomorphic to
the symmetric group of $n$ letters
and which acts on $H^*(BT)$
by permuting $t_1, t_2, \cdots, t_n$.
Let $H^*(BT)^{W(U(n))}$ be the algebra
which consists of
the polynomials in $H^*(BT)$
invariant under the action of $W(U(n))$.
Then it is well known that
\[
 (Bi)^*\colon H^*(BU(n))
=\mathbb{Z}[c_1, c_2, \cdots, c_n]
\xrightarrow{\cong}H^*(BT)^{W(U(n))}
\]
is an isomorphism of algebras
where $c_r\in H^{2r}(BU(n))$ is the $r$th universal Chern class
which corresponds to the $r$th elementary symmetric function
of $t_1, t_2, \cdots, t_n$
for $r\leq n$.
Let $s_r\in H^{2r}(BU(n))$ be the element
which corresponds to 
$t_1^r+t_2^r+\cdots +t_n^r\in H^{2r}(BT)^{W(U(n))}$
through $(Bi)^*$ and 
take the polynomial function $f_r$ of $r$ variables
such that $s_r=f_r(c_1, c_2, \cdots, c_r)$
for $r\leq n$.

Let $U$ be the infinite unitary group,
$BU$ the classifying space of $U$,
$i_n\colon U(n)\to U$ the inclusion,
and $Bi_n\colon BU(n)\to BU$ the map induced by $i_n$.
Recall that
\[
 H^*(BU)=\mathbb{Z}[c_1, c_2, \cdots, c_n, c_{n+1}, \cdots]
\]
where $c_r\in H^{2r}(BU)$ is the $r$th universal Chern class
which corresponds to $c_r\in H^{2r}(BU(n))$ if $r\leq n$
through the homomorphism 
$(Bi_n)^*\colon H^{2r}(BU)\to H^{2r}(BU(n))$ induced by $Bi_n$.
Denote the element $f_r(c_1, c_2, \cdots, c_r)$
for $c_1$, $c_2$, $\cdots$, $c_r\in H^*(BU)$
by the same letter $s_r\in H^{2r}(BU)$ as 
$s_r\in H^{2r}(BU(n))$
for $r\leq n$.

For a CW-complex $X$ 
and for $\theta\in \tilde{K}^0(X)=[X, BU]_*$,
let $s_r(\theta)=\theta^*(s_r)$,
the image of $s_r\in H^{2r}(BU)$
through the homomorphism 
$\theta^*\colon H^{2r}(BU)\to H^{2r}(X)$
induced by $\theta$.
Then, 
$\Theta(\theta)$ is given by
\[
 \Theta(\theta)=(-1)^ns_n(\theta)\in H^{2n}(X).
\]
Moreover, 
recall that for a CW-complex $X$ of dimension
less than or equal to $2n$ 
and for $\theta\in \tilde{K}^0(X)=[X, BU]_*$,
\[
 \ch(\theta)=\iota(s_1(\theta))
+\frac{\iota(s_2(\theta))}{2!}
+\cdots 
+\frac{\iota(s_n(\theta))}{n!}
=\sum_{r=1}^n\frac{\iota(s_r(\theta))}{r!}
\]
where
\[
 \ch\colon K(X)
=\mathbb{Z}\oplus \tilde{K}^{0}(X)\to \bigoplus_{r=0}^{n}H^{2r}(X; \mathbb{Q})
\]
is the Chern character 
and
$\iota\colon H^*(BU)\to H^*(BU ; \mathbb{Q})$
is induced by the inclusion $\mathbb{Z}\to \mathbb{Q}$.
Thus, 
if $H^{2n}(X)$ is free, 
then
$\iota\colon H^{2n}(X)\to H^{2n}(X ; \mathbb{Q})$
is monomorphic
and hence 
$\Theta(\theta)$ is determined by
\[
 \iota(\Theta(\theta))=(-1)^nn!\ch_{n}(\theta)
\in H^{2n}(X ; \mathbb{Q})
\]
where $\ch_n$ is the component of $\ch$
in $H^{2n}(X; \mathbb{Q})$.

Now we consider the case $X=\mathbb{C}P^n_k$.
(See Atiyah and Todd~\cite{AT_1960} for $\tilde{K}^j(\mathbb{C}P^n_k)$.) 
Let $\gamma$ be the canonical complex line bundle
over $\mathbb{C}P^n=\mathbb{C}P_1^n$.
Let $\mathbb{Z}[a]/(a^r)$ denote the truncated polynomial algebra
generated by an element $a$.
Then it is well known that
\begin{align*}
H^*(\mathbb{C}P^n)&=\mathbb{Z}[t]/(t^{n+1}), \\
K(\mathbb{C}P^n)&=\mathbb{Z}[z]/(z^{n+1}),
\end{align*}
and $\tilde{K}^{-1}(\mathbb{C}P^n)=0$
where $t\in H^2(\mathbb{C}P^n)$
and $z=\gamma-1\in\tilde{K}^0(\mathbb{C}P^n)$.
Also it is well known that
\[
 \ch(z)=\ch(\gamma)-1
=e^{-t}-1
=\sum_{r=1}^{n}(-1)^r\frac{t^r}{r!}
\in \bigoplus_{r=0}^{n}H^{2r}(\mathbb{C}P^{n}; \mathbb{Q})
\]
for the suitably chosen generator
$t\in H^2(\mathbb{C}P^n)\subset 
H^2(\mathbb{C}P^n; \mathbb{Q})$.
Then we have
\begin{equation}
\label{Chern}
  \ch(z^j)=(e^{-t}-1)^j
=\sum_{r=j}^{n}(-1)^rj!S(r, j) \frac{t^r}{r!}
=\sum_{r=j}^{n}(-1)^r\tilde{S}(r, j) \frac{t^r}{r!}
\end{equation}
(see Quaintance and Gould~\cite[(9.59)]{QG_2016})
and hence we have 
\[
\iota(\Theta(z^j))=(-1)^{n}n!\ch_n(z^j)
=(-1)^{n}n!(-1)^n\tilde{S}(n, j)\frac{t^n}{n!}=\tilde{S}(n, j)t^n
\]
for $1\leq j\leq n$.

Next, let 
$q=q_{j, \ell}\colon \mathbb{C}P^n_j\to\mathbb{C}P^n_{\ell}$
be the natural projection for $j\leq \ell$.
Let $\mathbb{Z}\{a_1, a_2, \cdots\}$ denote the free additive group
generated by elements $a_1, a_2, \cdots$.
Then, it is well known that
\begin{align*}
H^*(\mathbb{C}P_k^n)&=\mathbb{Z}\{t^k,\ t^{k+1},\ \cdots,\ t^n\}, \\
\tilde{K}^0(\mathbb{C}P^n_k)
&=\mathbb{Z}\{z^k,\ z^{k+1},\ \cdots,\ z^n\}, 
\end{align*}
and $\tilde{K}^{-1}(\mathbb{C}P_k^n)=0$.
Here the elements which correspond to
$t^j\in H^{2j}(\mathbb{C}P^n)$ and
$z^j\in \tilde{K}^0(\mathbb{C}P^n)$
through
$q_{1, k}^*\colon H^{2j}(\mathbb{C}P_k^n)
\to H^{2j}(\mathbb{C}P^n)$
and
$q_{1, k}^*\colon \tilde{K}^0(\mathbb{C}P_k^n)$
$\to\tilde{K}^0(\mathbb{C}P^n)$
are denoted
by the same letters
$t^j\in H^{2j}(\mathbb{C}P_k^n)$
and
$z^j\in \tilde{K}^0(\mathbb{C}P_k^n)$
respectively for $k\leq j\leq n$.

Then,
by \eqref{HKseq},
we have an exact sequence
\[
 \tilde{K}^0(\mathbb{C}P^n_k)\xrightarrow{\Theta}
H^{2n}(\mathbb{C}P^n_k)
\to U_{n}(\mathbb{C}P^n_k)\to 0
\]
of groups and
by the naturality of $\ch$,
we have
\[
 \iota(\Theta(z^j))=\tilde{S}(n, j)t^n
\]
for $k\leq j\leq n$.
It follows that
$\mathrm{Im}\Theta$ is the subgroup generated by 
$\tilde{S}(n, j)t^n$ for $j=k, k+1, \cdots, n$,
that is,
the subgroup generated by $\Delta_{n, k}t^n$
in $H^{2n}(\mathbb{C}P^n_k)=\mathbb{Z}\{t^n\}$.
Thus,
we have the following proposition.
\begin{proposition}
\label{gcd_is_order}
Let $n$ and $k$ be positive integers
such that $k\leq n$.
Then, the group $U_n(\mathbb{C}P^n_k)$ is
isomorphic to a cyclic group of order $\Delta_{n, k}$.
Moreover, there exists a sequence of epimorphisms of groups
\begin{equation}
\label{Useq_again}
U_n(\mathbb{C}P^{n}_{n})
\xrightarrow{q^*} U_n(\mathbb{C}P^{n}_{n-1})
\xrightarrow{q^*} \cdots
\xrightarrow{q^*} U_n(\mathbb{C}P^{n}_{2})
\xrightarrow{q^*} U_n(\mathbb{C}P^{n}_{1})
=0.
\end{equation}
\end{proposition}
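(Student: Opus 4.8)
The plan is to read off both assertions directly from the Hamanaka--Kono exact sequence \eqref{HKseq}, combined with the cohomological computations already carried out just above the proposition. First I would note that $\mathbb{C}P^n_k$ is a CW-complex of dimension exactly $2n$, so \eqref{HKseq} applies, and since $\tilde{K}^{-1}(\mathbb{C}P^n_k)=0$ it degenerates to the right-exact sequence
\[
 \tilde{K}^0(\mathbb{C}P^n_k)\xrightarrow{\Theta} H^{2n}(\mathbb{C}P^n_k)\to U_n(\mathbb{C}P^n_k)\to 0,
\]
whence $U_n(\mathbb{C}P^n_k)\cong H^{2n}(\mathbb{C}P^n_k)/\mathrm{Im}\,\Theta$. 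As established above, $H^{2n}(\mathbb{C}P^n_k)=\mathbb{Z}\{t^n\}$ is infinite cyclic and $\mathrm{Im}\,\Theta$ is generated by $\tilde{S}(n,j)t^n$ for $j=k,k+1,\cdots,n$, hence by $\Delta_{n,k}t^n$. Therefore $U_n(\mathbb{C}P^n_k)\cong\mathbb{Z}/\Delta_{n,k}\mathbb{Z}$ is cyclic of order $\Delta_{n,k}$, which settles the first assertion.

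For the sequence of epimorphisms I would fix $j$ with $1\leq j\leq n-1$ and exploit the naturality of \eqref{HKseq}. The projection $q=q_{j,j+1}\colon \mathbb{C}P^n_j\to\mathbb{C}P^n_{j+1}$ induces a morphism between the degenerate sequences for $\mathbb{C}P^n_{j+1}$ and $\mathbb{C}P^n_j$, giving a commutative square
\[
\begin{array}{ccc}
H^{2n}(\mathbb{C}P^n_{j+1}) & \longrightarrow & U_n(\mathbb{C}P^n_{j+1}) \\
\downarrow & & \downarrow \\
H^{2n}(\mathbb{C}P^n_{j}) & \longrightarrow & U_n(\mathbb{C}P^n_{j})
\end{array}
\]
whose vertical arrows are both $q^*$ and whose horizontal arrows are surjective. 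The decisive observation is that the left vertical $q^*$ is an isomorphism: in top degree both groups equal $\mathbb{Z}\{t^n\}$, and $q^*(t^n)=t^n$ because $q$ collapses only the bottom cell, of dimension $2j<2n$, so $q^*$ is an isomorphism in degrees $\geq 2j+2$. A one-line diagram chase then forces the right vertical $q^*\colon U_n(\mathbb{C}P^n_{j+1})\to U_n(\mathbb{C}P^n_j)$ to be surjective. Splicing these maps for $j=1,2,\cdots,n-1$ produces \eqref{Useq_again}, and the final term vanishes since $\tilde{S}(n,1)=1$ gives $\Delta_{n,1}=1$ and hence $U_n(\mathbb{C}P^n_1)=0$.

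The only genuine obstacle is bookkeeping rather than computation: one must confirm that \eqref{HKseq} is natural in $X$, so that the square above commutes, and that $q^*$ is an isomorphism on $H^{2n}$. The former is built into the construction in \cite{HK_2003}, where $\Theta$ is defined functorially through the Chern character; the latter follows at once from the cellular description $H^*(\mathbb{C}P^n_k)=\mathbb{Z}\{t^k,\cdots,t^n\}$ and the compatible choice of generators, under which $q^*$ carries $t^n$ to $t^n$. With these two points in hand, both parts of the proposition follow with no further calculation.
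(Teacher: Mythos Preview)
Your proof is correct and follows essentially the same approach as the paper: both read off the first assertion from the degenerate Hamanaka--Kono sequence together with the computation $\iota(\Theta(z^j))=\tilde S(n,j)t^n$, and your naturality/diagram-chase argument for the epimorphisms is exactly the (implicit) step the paper expects once it has identified $\mathrm{Im}\,\Theta=\Delta_{n,k}\mathbb{Z}\{t^n\}$. If anything, you have spelled out the surjectivity argument more carefully than the paper, which simply asserts the proposition after that computation.
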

Hence, Theorem~{\rm\ref{unstable}} follows from
this proposition and Corollary~{\rm\ref{gcd}}.

As an example of Theorem~{\rm\ref{unstable}}, 
for an odd integer $n$ such that $n\geq 3$,
\[
 \pi_{2n}(U(n))=U_n(S^{2n})
=U_n(\mathbb{C}P^n_{n})\cong U_n(\mathbb{C}P^n_{n-1})
\cong\mathbb{Z}/n!\mathbb{Z}
\]
by the equality $\Delta_{n, n}=n!$
or by the theorem of Borel and Hirzebruch~\cite{BH_1959}
for the homotopy group $\pi_{2n}(U(n))$.
As another example, 
for an odd integer $n$ such that $n\geq 3$,
\[
 U_n(\mathbb{C}P^n_3)\cong U_n(\mathbb{C}P^n_2)\cong
\mathbb{Z}/B'_{n-1}\mathbb{Z}
\]
by the theorem of Komatsu, Luca, and Ruiz~\cite{KLR_2014}
which states that $\Delta_{n, 2}=B'_{n-1}$.

\begin{corollary}
\label{power_of_two}
Let $n$ be an integer which is a power of two.
Then,
the $2$-primary component of 
$U_n(\mathbb{C}P^n_k)$ is a cyclic group 
of order $2^{k-1}$.
\end{corollary}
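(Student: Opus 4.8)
The plan is to reduce the statement to a $2$-adic computation and then prove an exact valuation formula. By Proposition~\ref{gcd_is_order}, $U_n(\mathbb{C}P^n_k)$ is cyclic of order $\Delta_{n,k}$, so its $2$-primary component is cyclic of order $2^{\nu_2(\Delta_{n,k})}$. Since $\Delta_{n,k}=\gcd\{\tilde S(n,j)\mid k\le j\le n\}$, we have $\nu_2(\Delta_{n,k})=\min\{\nu_2(\tilde S(n,j))\mid k\le j\le n\}$, and it therefore suffices to prove the stronger pointwise claim
\[
 \nu_2(\tilde S(2^s,j))=j-1\qquad(1\le j\le 2^s);
\]
the minimum over $j\ge k$ is then attained at $j=k$ and equals $k-1$, yielding the corollary.

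To prove the pointwise claim I would first record the identity
\[
 2^{m}\tilde S(m,k)=\sum_{r=0}^{k}\binom{k}{r}2^{k-r}\tilde S(m,k+r),
\]
valid for every $m$, which follows from the exponential generating function $\sum_{m}\tilde S(m,k)x^m/m!=(e^x-1)^k$ via the substitution $x\mapsto 2x$ and the factorization $e^{2x}-1=(e^x-1)^2+2(e^x-1)=(e^x-1)\bigl((e^x-1)+2\bigr)$. Fixing $m=2^s$ and moving the $r=0$ term to the left, this rearranges (terms with $k+r>m$ vanishing) to
\[
 (2^{m-k}-1)\,\tilde S(m,k)=2^{k-1}\sum_{r=1}^{\min(k,\,m-k)}\binom{k}{r}\,\frac{\tilde S(m,k+r)}{2^{k+r-1}}.
\]
I would then induct downward on $k$. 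The base case $k=m$ is $\tilde S(m,m)=m!$, whose valuation is $2^s-1=m-1$ by Legendre's formula. Assuming inductively that $\tilde S(m,k+r)=2^{k+r-1}\cdot(\text{odd})$ for all $r\ge 1$, each quotient on the right is an odd integer; since $2^{m-k}-1$ is odd, taking $\nu_2$ reduces the claim $\nu_2(\tilde S(m,k))=k-1$ to showing that $\sum_{r=1}^{\min(k,\,m-k)}\binom{k}{r}$ is odd.

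This parity statement is the heart of the matter. For $k\le m/2$ the sum is $\sum_{r=1}^{k}\binom{k}{r}=2^{k}-1$, which is odd. For $m/2<k<m$ it is $\sum_{r=1}^{m-k}\binom{k}{r}$; writing $k=2^{s-1}+k'$ with $1\le k'<2^{s-1}$, Lucas' theorem applied to the top binary digit gives $\binom{k}{r}\equiv\binom{k'}{r}\pmod 2$ for $r<2^{s-1}$, so this sum is congruent modulo $2$ to the analogous sum at level $s-1$, and an induction on $s$ closes the loop. The main obstacle is exactly that $j$ ranges all the way up to $2^s$: the naive congruence $\tilde S(2^s,j)\equiv(-1)^{j-1}2^{j-1}\pmod{2^{s+2}}$, obtained from $i^{2^s}\equiv 1\pmod{2^{s+2}}$ for odd $i$ together with $\nu_2(i^{2^s})\ge 2^s$ for even $i$, only determines the valuation for small $j$, whereas the identity above together with the parity lemma controls the entire range. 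Alternatively one may invoke De Wannemacker's theorem $\nu_2(S(2^s,j))=\sigma(j)-1$, where $\sigma(j)$ denotes the number of $1$'s in the binary expansion of $j$, which together with $\nu_2(j!)=j-\sigma(j)$ immediately gives $\nu_2(\tilde S(2^s,j))=j-1$.
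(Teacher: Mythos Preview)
Your argument is correct. The paper's own proof is much shorter: it simply invokes De Wannemacker's result that $\nu_2(\tilde S(n,k))=k-1$ when $n$ is a power of two, and then appeals to Proposition~\ref{gcd_is_order} --- exactly the alternative you mention in your closing sentence. What you add beyond this is a self-contained derivation of that valuation formula, via the identity $2^m\tilde S(m,k)=\sum_{r}\binom{k}{r}2^{k-r}\tilde S(m,k+r)$ (correctly obtained from the generating function by substituting $x\mapsto 2x$) and downward induction on $k$, with a Lucas-based parity lemma handling the range $m/2<k<m$ through a secondary induction on $s$. That last step is somewhat terse --- you might spell out that after Lucas the sum becomes $\sum_{r=1}^{2^{s-1}-k'}\binom{k'}{r}$, which is literally the instance of the parity claim with $(2^{s-1},k')$ in place of $(2^{s},k)$ --- but the logic is sound. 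The paper outsources this combinatorial core to \cite{Wannemacker_2005}; your route makes the corollary independent of that reference and in effect reproves De Wannemacker's theorem. Both proofs share the reduction through Proposition~\ref{gcd_is_order} and the pointwise claim $\nu_2(\tilde S(2^s,j))=j-1$; they diverge only in how that claim is established.
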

\begin{proof}
In Wannemacker~\cite{Wannemacker_2005},
it is shown that $\nu_2(\tilde{S}(n, k))=k-1$
if $n$ is a power of two.
Then, the corollary follows from Proposition~{\rm\ref{gcd_is_order}}.
\end{proof}
Let $\Sigma X$ denote the reduced suspension of a topological space $X$.
Let $b\colon S^{2k}\to \mathbb{C}P^n_k$ 
be the inclusion of the bottom cell of $\mathbb{C}P^n_k$
and put
\[
 \kappa_{n, k}=\frac{\Delta_{n, k+1}}{\Delta_{n, k}}
\]
for positive integers $n$ and $k$ such that $k<n$.
\begin{proposition}
\label{suspension}
Let $n$ and $k$ be positive integers
such that $k< n$.
Then, the group $U_n(\Sigma \mathbb{C}P^n_k)$ is 
isomorphic to a free aditive group of rank $n-k$.
Moreover, there exists a sequence of monomorphisms of groups
\begin{equation*}
\label{Useq_mono} 
U_n(\Sigma \mathbb{C}P^{n}_{n-1})
\xrightarrow{(\Sigma q)^*} \cdots
\xrightarrow{(\Sigma q)^*} U_n(\Sigma \mathbb{C}P^{n}_{2})
\xrightarrow{(\Sigma q)^*} U_n(\Sigma \mathbb{C}P^{n}_{1})
\end{equation*}
and the image of
\[
(\Sigma b)^*\colon 
U_n(\Sigma \mathbb{C}P^n_k)\to U_n(S^{2k+1})=\pi_{2k+1}(U(n))\cong
 \mathbb{Z}
\]
is $\kappa_{n, k} U_n(S^{2k+1})$.
\end{proposition}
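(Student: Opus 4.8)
The plan is to avoid applying the Hamanaka--Kono sequence \eqref{HKseq} directly to $\Sigma\mathbb{C}P^n_k$, whose dimension $2n+1$ exceeds the admissible range $2n$, and instead to feed the cofibration $S^{2k}\xrightarrow{b}\mathbb{C}P^n_k\xrightarrow{q}\mathbb{C}P^n_{k+1}$ into the contravariant functor $U_n(-)=[-,U(n)]_*$. Writing out the Puppe sequence and applying $U_n(-)$, one obtains the exact sequence of groups
\[
U_n(\Sigma\mathbb{C}P^n_{k+1})\xrightarrow{(\Sigma q)^*}U_n(\Sigma\mathbb{C}P^n_k)\xrightarrow{(\Sigma b)^*}U_n(S^{2k+1})\xrightarrow{\delta}U_n(\mathbb{C}P^n_{k+1})\xrightarrow{q^*}U_n(\mathbb{C}P^n_k)\xrightarrow{b^*}U_n(S^{2k}).
\]
Into this I would substitute the stable values $U_n(S^{2k})=\pi_{2k}(U(n))=0$ and $U_n(S^{2k+1})=\pi_{2k+1}(U(n))\cong\mathbb{Z}$, valid because $2k+1\leq 2n-1$ for $k<n$ puts these spheres in the range where $\pi_*(U(n))\cong\pi_*(U)$, and I would read off the cyclic groups $U_n(\mathbb{C}P^n_j)$ together with the epimorphisms $q^*$ from Proposition~\ref{gcd_is_order}.

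The image of $(\Sigma b)^*$ then falls out of bookkeeping. Since $q^*\colon U_n(\mathbb{C}P^n_{k+1})\to U_n(\mathbb{C}P^n_k)$ is the epimorphism in \eqref{Useq} between cyclic groups of orders $\Delta_{n,k+1}$ and $\Delta_{n,k}$, its kernel is cyclic of order $\Delta_{n,k+1}/\Delta_{n,k}=\kappa_{n,k}$. By exactness $\operatorname{im}\delta=\ker q^*$ has order $\kappa_{n,k}$; as $\delta$ is a homomorphism out of $U_n(S^{2k+1})\cong\mathbb{Z}$, its kernel is therefore $\kappa_{n,k}U_n(S^{2k+1})$. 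Exactness at $U_n(S^{2k+1})$ now gives $\operatorname{im}((\Sigma b)^*)=\ker\delta=\kappa_{n,k}U_n(S^{2k+1})$, which is the asserted image.

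For the structure of $U_n(\Sigma\mathbb{C}P^n_k)$ itself I observe that for $k\leq n-2$ one also has $U_n(S^{2k+2})=\pi_{2k+2}(U(n))=0$, so the group mapping into $U_n(\Sigma\mathbb{C}P^n_{k+1})$ vanishes and $(\Sigma q)^*$ is a monomorphism; this already produces the asserted tower of monomorphisms, and combined with the previous paragraph it yields the short exact sequence
\[
0\to U_n(\Sigma\mathbb{C}P^n_{k+1})\xrightarrow{(\Sigma q)^*}U_n(\Sigma\mathbb{C}P^n_k)\xrightarrow{(\Sigma b)^*}\kappa_{n,k}U_n(S^{2k+1})\to 0.
\]
Because the quotient is infinite cyclic this splits, so $U_n(\Sigma\mathbb{C}P^n_k)\cong U_n(\Sigma\mathbb{C}P^n_{k+1})\oplus\mathbb{Z}$, and downward induction on $k$ then delivers both the freeness and the rank $n-k$, provided the induction is anchored at the top.

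The main obstacle is precisely this anchoring, the case $k=n-1$, where $\Sigma\mathbb{C}P^n_n=S^{2n+1}$ and $U_n(S^{2n})=\pi_{2n}(U(n))=\mathbb{Z}/n!\mathbb{Z}$ is nonzero, so the splitting argument breaks. The exact sequence still forces $U_n(\Sigma\mathbb{C}P^n_{n-1})$ to have rank one, with torsion subgroup equal to $\operatorname{im}\bigl((\Sigma q)^*\colon\pi_{2n+1}(U(n))\to U_n(\Sigma\mathbb{C}P^n_{n-1})\bigr)$, a quotient of the finite unstable group $\pi_{2n+1}(U(n))$. To conclude $U_n(\Sigma\mathbb{C}P^n_{n-1})\cong\mathbb{Z}$ I must show this image vanishes; by exactness that is equivalent to surjectivity of the connecting homomorphism $\pi_{2n}(U(n))\to\pi_{2n+1}(U(n))$, which is composition with the stable Hopf class $\eta$ coming from the attaching map of the top cell of $\mathbb{C}P^n_{n-1}$. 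I would settle this by invoking the classical determination $\pi_{2n+1}(U(n))=0$ for $n$ odd and $\mathbb{Z}/2\mathbb{Z}$ for $n$ even, together with the fact that $\eta$-composition hits the generator in the even case. An alternative route to freeness, bypassing these unstable computations, is to use the suspension isomorphisms $\tilde{K}^0(\Sigma\mathbb{C}P^n_k)=0$ and $\tilde{K}^{-1}(\Sigma\mathbb{C}P^n_k)\cong\tilde{K}^0(\mathbb{C}P^n_k)$ to identify $U_n(\Sigma\mathbb{C}P^n_k)$ with the kernel of the analogue of $\Theta$ on the free group $\tilde{K}^0(\mathbb{C}P^n_k)$, which is automatically free of rank $n-k$; the price is that one must then verify that the correction term $[\Sigma\mathbb{C}P^n_k,\Omega(U/U(n))]$ arising from the fibration $U(n)\to U\to U/U(n)$ is trivial, which again rests on the vanishing of the same unstable contribution.
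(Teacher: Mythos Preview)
Your Puppe-sequence argument for the monomorphisms $(\Sigma q)^*$ and for the image of $(\Sigma b)^*$ is exactly what the paper does; there the two proofs coincide. The difference is in how freeness of $U_n(\Sigma\mathbb{C}P^n_k)$ is obtained. You set up a downward induction, anchored at $k=n-1$ by invoking Toda's computation of $\pi_{2n+1}(U(n))$ and, in the even case, the surjectivity of the $\eta$-multiplication $\pi_{2n}(U(n))\to\pi_{2n+1}(U(n))$. The paper instead appeals directly to Hamanaka's extension of the sequence \eqref{HKseq} to complexes of dimension $2n+1$ (this is \cite{Hamanaka_2004}, Theorems~1.2 and~1.3): that result gives an exact sequence
\[
0\to U_n(\Sigma\mathbb{C}P^n_k)\to \tilde{K}^{-1}(\Sigma\mathbb{C}P^n_k)\xrightarrow{T}H^{2n+1}(\Sigma\mathbb{C}P^n_k),
\]
the injectivity on the left being guaranteed by $H^{2n}(\Sigma\mathbb{C}P^n_k)=0$ together with, for $n$ even, the surjectivity of $Sq^2\colon H^{2n-1}(\Sigma\mathbb{C}P^n_k;\mathbb{Z}/2)\to H^{2n+1}(\Sigma\mathbb{C}P^n_k;\mathbb{Z}/2)$, which holds because $Sq^2 t^{n-1}=(n-1)t^n$. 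After identifying $T$ with $\Theta$ via the Bott map and suspension, freeness of rank $n-k$ drops out uniformly for all $k<n$, with no separate anchor case. Your ``alternative route'' is essentially this, but you describe the obstruction as the vanishing of $[\Sigma\mathbb{C}P^n_k,\Omega(U/U(n))]$; what Hamanaka's theorem actually needs is only the cohomological $Sq^2$ condition just stated, which is immediately checkable and packages the same unstable input more cleanly than the $\eta$-action on $\pi_*(U(n))$. Both arguments are correct; the paper's is shorter and avoids the case split at $k=n-1$.
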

\begin{proof}
We have an exact sequence of groups
\[
 0\to U_n(\Sigma \mathbb{C}P^n_k)
\to \tilde{K}^{-1}(\Sigma \mathbb{C}P^n_k)
\xrightarrow{T}H^{2n+1}(\Sigma \mathbb{C}P^n_k)
\]
by Theorem 1.2 and Theorem 1.3 of Hamanaka~\cite{Hamanaka_2004},
by $H^{2n}(\Sigma \mathbb{C}P^n_k)=0$,
and by $Sq^2\rho H^{2n-1}(\Sigma\mathbb{C}P^n_k)
=H^{2n+1}(\Sigma\mathbb{C}P^n_k; \mathbb{Z}/2\mathbb{Z})$
if $n$ is even where
\[
\rho\colon H^{2n-1}(\Sigma\mathbb{C}P^n_k)
\to H^{2n-1}(\Sigma\mathbb{C}P^n_k; \mathbb{Z}/2\mathbb{Z})
\]
is the mod $2$ reduction
and
\[
Sq^2\colon H^{2n-1}(\Sigma\mathbb{C}P^n_k; \mathbb{Z}/2\mathbb{Z})
\to H^{2n+1}(\Sigma\mathbb{C}P^n_k; \mathbb{Z}/2\mathbb{Z})
\]
is the Steenrod squaring operation.
Moreover, 
the homomorphism 
\[
T\colon \tilde{K}^{-1}(\Sigma \mathbb{C}P^n_k)
\to H^{2n+1}(\Sigma \mathbb{C}P^n_k)
\]
is identified with
\[
 \Theta\colon \tilde{K}^{0}(\mathbb{C}P^n_k)
\to H^{2n}(\mathbb{C}P^n_k)
\]
through the suspension isomorphism
\[
\sigma^*\colon H^{2n}(\mathbb{C}P^n_k)\to 
H^{2n+1}(\Sigma \mathbb{C}P^n_k)
\]
and the isomorphism
\[
 \tilde{K}^{0}(\mathbb{C}P^n_k)
=[\mathbb{C}P^n_k, BU]_*
\xrightarrow{\beta_*}
[\mathbb{C}P^n_k, \Omega U]_*
\cong
[\Sigma\mathbb{C}P^n_k, U]_*
=\tilde{K}^{-1}(\Sigma \mathbb{C}P^n_k)
\]
where $\beta\colon BU\to \Omega U$ is the Bott map.
It follows that 
$U_n(\Sigma \mathbb{C}P^n_k)\cong \textrm{Ker}\ T$
is isomorphic to a free additive group of rank $n-k$.

Next, for $k\leq n-2$,
considering the following homotopy cofibration sequence
\[
 S^{2k}\xrightarrow{b}\mathbb{C}P^{n}_{k}
\xrightarrow{q}\mathbb{C}P^{n}_{k+1}
\to S^{2k+1}\xrightarrow{\Sigma b}\Sigma\mathbb{C}P^{n}_{k}
\xrightarrow{\Sigma q}\Sigma\mathbb{C}P^{n}_{k+1}
\to S^{2k+2}
\]
and applying the functor $U_n(-)$,
we have the following exact sequence of groups
\begin{align*}
 0\to &U_n(\Sigma \mathbb{C}P^{n}_{k+1})
 \xrightarrow{(\Sigma q)^*}
 U_n(\Sigma \mathbb{C}P^{n}_{k})
 \xrightarrow{(\Sigma b)^*}
 U_n(S^{2k+1}) \\
&\qquad \to U_n(\mathbb{C}P^{n}_{k+1})
\xrightarrow{q^*}
 U_n(\mathbb{C}P^{n}_{k})
 \xrightarrow{b^*}
 0.
\end{align*}
Thus, by Proposition~\ref{gcd_is_order},
the proposition follows.
\end{proof}
\noindent
\textbf{Remark.}
\ As is well known, 
$U_n(\Sigma \mathbb{C}P^{n}_{n})=\pi_{2n+1}(U(n))$
is trivial if $n$ is odd
and is isomorphic to $\mathbb{Z}/2\mathbb{Z}$ if $n$ is even.
See Toda~\cite{Toda_1959}.
\begin{corollary}
 For any odd integer $n\geq 3$ and any positive even integer $k$
such that $k<n$, 
the sequence of groups
\[
 0\to U_n(\Sigma \mathbb{C}P^{n}_{k+1})
 \xrightarrow{(\Sigma q)^*}
 U_n(\Sigma \mathbb{C}P^{n}_{k})
 \xrightarrow{(\Sigma b)^*}
 U_n(S^{2k+1})\to 0
\]
is split exact.
\end{corollary}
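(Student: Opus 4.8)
The plan is to realize the claimed short exact sequence as the initial segment of the long exact sequence already obtained in the proof of Proposition~\ref{suspension}, and then to split it using the fact that its rightmost term is infinite cyclic. Applying the (contravariant) functor $U_n(-)$ to the homotopy cofibration sequence
\[
 S^{2k}\xrightarrow{b}\mathbb{C}P^{n}_{k}
 \xrightarrow{q}\mathbb{C}P^{n}_{k+1}
 \to S^{2k+1}\xrightarrow{\Sigma b}\Sigma\mathbb{C}P^{n}_{k}
 \xrightarrow{\Sigma q}\Sigma\mathbb{C}P^{n}_{k+1}
\]
produces, exactly as in Proposition~\ref{suspension}, the exact sequence
\[
 0\to U_n(\Sigma \mathbb{C}P^{n}_{k+1})
 \xrightarrow{(\Sigma q)^*}
 U_n(\Sigma \mathbb{C}P^{n}_{k})
 \xrightarrow{(\Sigma b)^*}
 U_n(S^{2k+1})
 \xrightarrow{\partial} U_n(\mathbb{C}P^{n}_{k+1})
 \xrightarrow{q^*}
 U_n(\mathbb{C}P^{n}_{k})
 \to 0 .
\]
Exactness at the first two nonzero terms is precisely what Proposition~\ref{suspension} supplies, so the only new points are the surjectivity of $(\Sigma b)^*$ and the existence of a splitting.

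For the surjectivity I would show that the connecting homomorphism $\partial$ is zero. Since $n$ is odd with $n\geq 3$ and $k$ is a positive even integer with $k<n$, Theorem~\ref{unstable} applies and tells us that $q^*\colon U_n(\mathbb{C}P^{n}_{k+1})\to U_n(\mathbb{C}P^{n}_{k})$ is an isomorphism, in particular a monomorphism. Exactness at $U_n(\mathbb{C}P^{n}_{k+1})$ then gives $\mathrm{Im}\,\partial=\mathrm{Ker}\,q^*=0$, so $\partial=0$, whence exactness at $U_n(S^{2k+1})$ forces $\mathrm{Im}\,(\Sigma b)^*=\mathrm{Ker}\,\partial=U_n(S^{2k+1})$. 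Alternatively, surjectivity can be read off directly from Proposition~\ref{suspension}, which identifies $\mathrm{Im}\,(\Sigma b)^*$ with $\kappa_{n,k}U_n(S^{2k+1})$, together with Corollary~\ref{gcd}, which gives $\kappa_{n,k}=\Delta_{n,k+1}/\Delta_{n,k}=1$. In either case the five-term sequence collapses to the short exact sequence of the corollary.

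To finish, I would split this short exact sequence. Its cokernel is $U_n(S^{2k+1})=\pi_{2k+1}(U(n))\cong\mathbb{Z}$ (Proposition~\ref{suspension}; note $2k+1\leq 2n-1$ lies in the stable range), which is free abelian and therefore projective, so any short exact sequence of abelian groups with this cokernel admits a section. Hence the sequence is split exact. I expect no genuinely hard step: the entire content is the vanishing of $\partial$, which is exactly where Theorem~\ref{unstable} (equivalently Corollary~\ref{gcd}) is used, with freeness of $\pi_{2k+1}(U(n))$ doing the rest. The only point requiring a little care is the boundary case $k=n-1$, which does occur here because $n$ is odd; there $U_n(\Sigma\mathbb{C}P^{n}_{k+1})=U_n(\Sigma\mathbb{C}P^{n}_{n})=\pi_{2n+1}(U(n))=0$ for odd $n$, and the sequence degenerates to the assertion that $(\Sigma b)^*$ is an isomorphism, which the same argument yields.
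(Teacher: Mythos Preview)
Your proposal is correct and follows essentially the same route as the paper, which simply cites Proposition~\ref{suspension} and Corollary~\ref{gcd}: the former gives exactness with $\mathrm{Im}\,(\Sigma b)^*=\kappa_{n,k}U_n(S^{2k+1})$, the latter gives $\kappa_{n,k}=1$, and the splitting comes from freeness of $U_n(S^{2k+1})\cong\mathbb{Z}$. Your version is in fact more complete, since you spell out the splitting argument and treat the boundary case $k=n-1$ explicitly.
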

\begin{proof}
This follows from Proposition~\ref{suspension}
and Corollary~\ref{gcd}.
\end{proof}
\noindent
\textbf{Remark.}
\ We can give generators of 
\begin{align*}
  U_n(\Sigma \mathbb{C}P^n_k)&\cong 
 \textrm{Ker} (T\colon \tilde{K}^{-1}(\Sigma \mathbb{C}P^n_k)
 \to H^{2n+1}(\Sigma \mathbb{C}P^n_k)) \\
 &\cong \textrm{Ker} (\Theta\colon \tilde{K}^{0}(\mathbb{C}P^n_k)
\to H^{2n}(\mathbb{C}P^n_k)) \\
&\cong \mathbb{Z}^{n-k} 
\end{align*}
using arithmetic relations of $\tilde{S}(n,k)$.
As an example, for $n=7$,
we have 
\begin{align*}
&\phantom{=}
(\tilde{S}(7, 7), \tilde{S}(7, 6), \tilde{S}(7, 5), \tilde{S}(7, 4), 
\tilde{S}(7, 3), \tilde{S}(7, 2), \tilde{S}(7, 1)) \\
&=(5040, 15120, 16800, 8400, 1806, 126, 1).
\end{align*}
Hence, we have 
\[
 (\Delta_{7, 7}, \Delta_{7, 6}, \Delta_{7, 5}, \Delta_{7, 4}, 
\Delta_{7, 3}, \Delta_{7, 2}, \Delta_{7, 1})=(5040, 5040, 1680, 1680, 42, 42, 1)
\]
and 
\[
(\kappa_{7, 6}, \kappa_{7, 5}, \kappa_{7, 4}, 
\kappa_{7, 3}, \kappa_{7, 2}, \kappa_{7, 1})
=(1, 3, 1, 40, 1, 42).
\]
Define elements $\alpha_6, \cdots, \alpha_1$ by
\begin{align*}
&\phantom{=}
\begin{pmatrix}
\alpha_6 & \alpha_5 & \alpha_4 & \alpha_3 & \alpha_2 & \alpha_1
\end{pmatrix} \\
&=
\begin{pmatrix}
z^7 & z^6 & z^5 & z^4 & z^3 & z^2 & z
\end{pmatrix}
\begin{pmatrix}
-3 & -10 & -5 & -16 &  2 & -1 \\
 1 &   0 &  0 &   0 &  0 &  0 \\
 0 &   3 &  1 &   0 &  0 &  0 \\
 0 &   0 &  1 &   1 & -1 &  0 \\
 0 &   0 &  0 &  40 & -1 &  2 \\
 0 &   0 &  0 &   0 &  1 & 11 \\
 0 &   0 &  0 &   0 &  0 & 42 
\end{pmatrix}.
\end{align*}
Then, for $k=6, 5, \cdots, 1$, we can easily see that
$\alpha_6, \cdots, \alpha_k$
generates 
\[
\textrm{Ker} (\Theta\colon \tilde{K}^{0}(\mathbb{C}P^7_k)
\to H^{14}(\mathbb{C}P^7_k)) \\
\cong \mathbb{Z}^{7-k},
\] 
which is a subgroup of
\[
 \tilde{K}^0(\mathbb{C}P^7_k)=\mathbb{Z}\{z^7,\  \cdots,\ z^k\}.
\]

Finally, 
we give a characterization of $b_k=M_k$
in terms of the sequence \eqref{Useq_again}.
Recall that for a positive integer $k$,
there exists a positive integer $b_k$,
the complex James number
of Stiefel manifolds, 
such that
the following conditions are equivalent:
\begin{description}
\item[(A)]
the integer $n$ is a positive multiple of $b_k$
 \item[(B)] 
the fibration
\[
W_{n, k}=U(n)/U(n-k) \to W_{n, 1}=U(n)/U(n-1)=S^{2n-1}
\]
of complex Stiefel manifolds admits a cross-section.
\end{description}
(See James~\cite{James_1958} and Atiyah~\cite{Atiyah_1961}.)
Also recall that for a positive integer $k$,
the Atiyah-Todd number $M_k$ is defined by 
\[
\nu_p(M_k)=
\begin{cases}
 &\max\left\{r+\nu_p(r)\ \big\vert\ 1\leq r\leq
 \left[\frac{k-1}{p-1}\right]\right\}
 \quad \text{if}\ \ p\leq k, \\
 &0 \quad \text{if}\ \ k<p.
\end{cases}
\]
(See Atiyah and Todd~\cite{AT_1960}.)
For example, $M_1=1,\ M_2=2,\ M_3=M_4=24,\ M_5=M_6=2880, \cdots$.
It is obvious that $M_k$ divides $M_{k+1}$
and it is shown in \cite{AT_1960} that $M_k=M_{k+1}$
for any odd integer $k\geq 3$.
In \cite{AT_1960},
it is shown that $b_k$ is a positive multiple of $M_k$
for any positive integer $k$.
Later, Adams and Walker
show that $b_k=M_k$
for any positive integer $k$
by examining the $J$-group of $\mathbb{C}P^{k-1}$.
(See \cite{AW_1965}. 
Also see \cite{Atiyah_1961}.)
Also it is known that there are several conditions 
which are equivalent to \textbf{(A)} and \textbf{(B)}.
For example, 
in terms of stable homotopy theory,
the condition that
\begin{description}
 \item[(C)] 
the stunted complex projective space $\mathbb{C}P^{n-1}_{n-k}$
is $S$-reducible, 
that is,
the stable homotopy class of the attaching map of the top cell 
of $\mathbb{C}P^{n-1}_{n-k}$ is inessential
\end{description}
is equivalent to \textbf{(A)} and \textbf{(B)}.
(See James~\cite{James_1959}.)

Now, 
by the result of Atiyah and Todd~\cite[Theorem~(1.7)]{AT_1960},
the condition that
$n$ is a positive multiple of $M_{k}$
is equivalent to that
the coefficient of $t^{n-1}$ in $(e^t-1)^j$
is an integer for $j=n-k, n-k+1, \cdots, n-1$.
By \eqref{Chern},
this condition is equivalent to that
$\tilde{S}(n-1, j)$ is a positive multiple 
of $(n-1)!$
for $j=n-k, n-k+1, \cdots, n-1$ and hence,
is equivalent to that
$\Delta_{n-1, n-k}=(n-1)!$.
Thus we have another condition
which is equivalent to \textbf{(A)}, 
\textbf{(B)}, and \textbf{(C)}
as in the following proposition.
\begin{proposition}
\label{James}
The integer $n$ is a positive multiple 
of the complex James number and
the Atiyah-Todd number $b_{k}=M_{k}$
if and only if
we have isomorphisms of groups
\[
U_{n-1}(\mathbb{C}P^{n-1}_{n-1})\cong
 U_{n-1}(\mathbb{C}P^{n-1}_{n-2})\cong 
\cdots \cong 
U_{n-1}(\mathbb{C}P^{n-1}_{n-k})\cong\mathbb{Z}/(n-1)!\mathbb{Z}.
\]
\end{proposition}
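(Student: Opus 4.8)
The plan is to observe that the analytic heart of the statement has already been carried out in the paragraph preceding the proposition, so the proof reduces to repackaging that equivalence in terms of the unstable $K^1$-groups. The discussion above establishes, via Atiyah and Todd's Theorem (1.7) together with the Chern character computation \eqref{Chern}, that $n$ is a positive multiple of $b_k = M_k$ if and only if $\Delta_{n-1, n-k} = (n-1)!$. It therefore suffices to show that this single arithmetic condition is equivalent to the displayed chain of isomorphisms.

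First I would invoke Proposition~\ref{gcd_is_order}: for each $j$ with $n-k \leq j \leq n-1$ the group $U_{n-1}(\mathbb{C}P^{n-1}_j)$ is cyclic of order $\Delta_{n-1, j}$, and the natural projections induce epimorphisms
\[
U_{n-1}(\mathbb{C}P^{n-1}_{n-1}) \xrightarrow{q^*} U_{n-1}(\mathbb{C}P^{n-1}_{n-2}) \xrightarrow{q^*} \cdots \xrightarrow{q^*} U_{n-1}(\mathbb{C}P^{n-1}_{n-k})
\]
between consecutive terms. Since an epimorphism between finite cyclic groups is an isomorphism exactly when source and target have equal order, the displayed chain of isomorphisms holds if and only if $\Delta_{n-1, n-1} = \Delta_{n-1, n-2} = \cdots = \Delta_{n-1, n-k}$.

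Next I would compute $\Delta_{n-1, n-1} = \tilde{S}(n-1, n-1) = (n-1)!$ from $S(n-1, n-1) = 1$, and note that $\Delta_{n-1, i}$ divides $\Delta_{n-1, i+1}$ because the former is a greatest common divisor taken over a strictly larger index set. This yields the divisibility chain $\Delta_{n-1, n-k} \mid \Delta_{n-1, n-k+1} \mid \cdots \mid \Delta_{n-1, n-1} = (n-1)!$, so all the intermediate greatest common divisors are forced to equal $(n-1)!$ as soon as $\Delta_{n-1, n-k} = (n-1)!$. Hence the string of equalities among the $\Delta_{n-1, j}$ collapses to the single condition $\Delta_{n-1, n-k} = (n-1)!$, and when it holds each group in the chain is cyclic of order $(n-1)!$, giving the asserted isomorphisms with $\mathbb{Z}/(n-1)!\mathbb{Z}$.

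I do not anticipate a genuine obstacle: the substantive input is the Atiyah--Todd equivalence recorded just before the statement, which I am free to assume, together with Proposition~\ref{gcd_is_order}. The only conceptual step is the elementary but essential observation that epimorphisms of finite cyclic groups detect equality of orders, which is precisely what converts the topological chain of isomorphisms into the purely arithmetic condition $\Delta_{n-1, n-k} = (n-1)!$. The remaining care is merely bookkeeping with the index ranges and the monotonicity $\Delta_{n-1, i} \mid \Delta_{n-1, i+1}$ of the greatest common divisors.
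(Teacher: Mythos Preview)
Your proposal is correct and follows essentially the same approach as the paper. The paper's argument is contained in the paragraph immediately preceding the proposition: it reduces, via Atiyah--Todd and the Chern character identity \eqref{Chern}, to the condition $\Delta_{n-1,n-k}=(n-1)!$, and then leaves the passage to the chain of isomorphisms implicit through Proposition~\ref{gcd_is_order}; you simply spell out that last step (epimorphisms of finite cyclic groups, the divisibility chain $\Delta_{n-1,n-k}\mid\cdots\mid\Delta_{n-1,n-1}=(n-1)!$) in more detail than the paper does.
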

For example,
since $24$ is a positive multiple 
of $b_{4}=M_{4}=24$,
we have
\[
\Delta_{23, 23}=
\Delta_{23, 22}=
\Delta_{23, 21}=
\Delta_{23, 20}=23!
\]
and
\[
U_{23}(\mathbb{C}P^{23}_{23})\cong U_{23}(\mathbb{C}P^{23}_{22})\cong
U_{23}(\mathbb{C}P^{23}_{21})\cong U_{23}(\mathbb{C}P^{23}_{20})\cong
\mathbb{Z}/23!\mathbb{Z}.
\]

\end{document}